\newtheorem{theorem}{Theorem}[section]
\newtheorem{lemma}[theorem]{Lemma}
\newtheorem{proposition}[theorem]{Proposition}
\newtheorem{corollary}[theorem]{Corollary}
\theoremstyle{definition}
\newtheorem{remark}[theorem]{Remark}
\newtheorem{definition}[theorem]{Definition}
\newtheorem*{ack}{Acknowledgement}
\def\til{\widetilde}
\def\mbb{\mathbb}
\def\mcl{\mathcal}
\def\ten{\otimes}
\def\strl{\stackrel}
\def\tu{\textup}
\def\a{\alpha}
\def\d{\delta}
\def\e{\epsilon}
\def\sm{\sigma}
\def\o{\omega}
\def\bP{\mbb P}
\def\bG{\mathbb G}
\def\bC{\mathbb C}
\def\inj{\hookrightarrow}
\def\deg{\textup{deg} \, }
\def\spec{\tu{Spec\,}}
\def\proj{\tu{Proj  }}
\def\hilb{Hilb}
\def\Mg{\overline{M}_g}
\def\FMg{\overline{\bf \mathcal M}_g}
\def\inj{\hookrightarrow}
\def\M{\bar{M}}
\def\cO{\mathcal O}
\def\bar{\overline}
\def\isomto{\strl{\sim}{\to}}
\def\tac0{\textup{Tac}_0}
\def\Hhs{\overline H_4^{hs}}
\def\Mhs{\bar M_4^{hs}}
\def\dhs{\delta^{hs}}
\def\bQ{\mathbb Q}
\def\chow41{Chow_{4,1}/\!\!/SL_4}
\def\chow{Chow}
\def\tilde{\widetilde}
\def\Mg{\bar M_g}
\def\M{\bar M}
\def\SL{\textup{SL}}
\begin{document}

\title[Contraction of genus two tails]{A birational contraction of genus two tails in the moduli space of genus four curves I}
%\shorttitle{Contraction of genus two tails}

% Enter the publication year and the ID number of the paper
%\volumeyear{2009}
%\paperID{rnn999}

% Author name(s)

\author{Donghoon Hyeon}
\address{Department of Mathematics\\ POSTECH \\ Pohang, Gyeongbuk
790-784 \\ Republic of Korea\\}
\email{dhyeon@postech.ac.kr}

\author{Yongnam Lee}
\address{Department of Mathematical Sciences \\ Korea Advanced Institute of Science and Technology \\  Daejon 305-701 \\
Republic of Korea \\ and Korea Institute for Advanced Study \\ Seoul 130-722 \\ Republic of Korea \\}
\email{ynlee@kaist.ac.kr}

\subjclass[2000]{Primary 14L24, 14H10 \\Secondary 14D22}
\keywords{moduli, stable curve, log canonical model, tail}

\begin{abstract} We show that for $\alpha \in (2/3, 7/10)$, the log canonical model $\bar M_4(\alpha)$ of the pair $(\bar M_4, \alpha \delta)$ is isomorphic to the moduli space $\bar M_4^{hs}$ of h-semistable curves constructed in  \cite{HH2}, and that there is a birational morphism $\Xi: \bar M_4^{hs} \to \bar M_4(2/3)$ that contracts the locus of curves $C_1\cup_p C_2$ consisting of genus two curves meeting in a node $p$ such that $p$ is a Weierstrass point of $C_1$ or $C_2$. To obtain this morphism, we construct a compact moduli space $\M_{2,1}^{hs}$ of
pointed genus two curves  that have nodes, ordinary cusps and
tacnodes as singularities, and  prove that it is isomorphic to
Rulla's flip constructed in \cite{Rulla}.
\end{abstract}

\maketitle
% Enter full title and short title for running headers

\section{Introduction}

In this paper, we continue our investigation of the log minimal model program for
the moduli space of stable curves. In \cite{Has}, \cite{HH1}, \cite{HH2},
\cite{HL2}, \cite{HL3}, Brendan Hassett and the authors have
studied the log canonical models
\[
\bar M_g(\a) = \proj \oplus_{n \ge 0} \Gamma(\bar M_g, n(K_{\FMg} + \a\d)), \quad \a \in (7/10-\e, 1]
\]
where $\e$ is a small positive rational number. Each log canonical
model was given a GIT construction and interpreted as a moduli
space, and their relations were concretely described.
%Let $\Mg^{ps}$, $\Mg^{cs}$ and $\Mg^{hs}$ denote the moduli space of pseudostable curve \cite{Sch}, of c-semistable curves and h-semistable curves respectively \cite{HH2}.
For $g \ge 3$, the first divisorial contraction $T$ occurs at $\a
= 9/11$ followed by a flip at $\a = 7/10$. \small\[
\xymatrix{ \bar M_g(1)\simeq\M_g \ar[d]_-{T} &&&  \\
\bar M_g(\frac9{11})\simeq  \bar M_g^{ps} \ar[ddr]_-{\Psi} &
& \bar M_g(\frac7{10}-\e) \simeq \bar M_g^{hs}\ar[ddl]^-{\Psi^+} \\
\\
& \bar M_g(\frac7{10})\simeq \bar M_g^{cs} &  \\
 }
\]\normalsize
Here $\Mg^{ps}$, $\Mg^{cs}$ and $\Mg^{hs}$ respectively denote the moduli spaces
of pseudostable curve \cite{Sch}, of c-semistable curves and of
h-semistable curves \cite{HH2}.
 In this paper, we shall prove that for $g = 4$, the sequence of maps above is followed by another small contraction:
\begin{theorem}\label{T:1}
 \begin{enumerate}
\item $\M_4(\a)$ is isomorphic to $\M_4^{hs}$ for $2/3 < \a < 7/10$.
\item There is a birational morphism
$
\Xi : \Mhs \to \M_4(2/3)
$
contracting the locus of Weierstrass genus two tails.
\end{enumerate}\end{theorem}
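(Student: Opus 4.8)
The plan is to prove both parts by analysing one $\mathbb Q$-divisor class on the fixed model $\Mhs$, namely $D_\a:=K_{\FMg}+\a\d=13\la-(2-\a)\d$ (with $\la,\d$ the Hodge and total boundary classes, regarded on $\Mhs$), and pinning down the value of $\a$ where its ampleness first fails. Part~(1) asserts that the log canonical model is constant across the chamber $(2/3,7/10)$, so since \cite{HH2} already gives $\M_4(7/10-\e)\simeq\Mhs$, hence $D_\a$ ample on $\Mhs$ for $\a$ just below $7/10$, it suffices to show $D_\a$ stays ample on all of $(2/3,7/10)$; the chamber argument then forces $\M_4(\a)\simeq\Mhs$ throughout. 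As $\Mhs$ is projective with finite quotient singularities and carries a GIT polarization, I would establish ampleness by Kleiman's criterion applied to an explicit list of candidate extremal curves on $\Mhs$: the families realizing the extremal rays of the $9/11$ and $7/10$ modifications, together with the one-parameter families sweeping the boundary divisors $\d_0$ and $\d_2$.

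The decisive test curves $B$ are those that fix a pointed genus two curve $(C_2,p_2)$ and vary the complementary tail $(C_1,p_1)$ subject to $p_1$ staying a Weierstrass point of $C_1$ --- concretely a pencil of hyperelliptic curves $y^2=f(x)$ in which one branch point is held as the node. Along such a family $\la\cdot B$ and $\d\cdot B$ are computed from the genus two relation $10\la=\d_0+2\d_1$ on $\M_2$ together with the contribution of the attaching node, the Weierstrass condition being exactly what fixes that contribution. I expect the outcome that $D_\a\cdot B=13(\la\cdot B)-(2-\a)(\d\cdot B)$ vanishes precisely at $\a=2/3$, is strictly positive for $\a\in(2/3,7/10)$, and that every other candidate ray keeps $D_\a$ positive across the whole interval; this yields ampleness, hence~(1). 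The same computation exhibits $D_{2/3}$ as nef but not ample, trivial exactly on the closure $R$ of the Weierstrass genus two tails.

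For~(2) the aim is to promote $D_{2/3}$ from nef to semiample and to identify its contraction with $\Xi$. Nefness is the wall computation; the real content is semiampleness together with a set-theoretic description of the morphism. Here I would use the local structure of $R$: a Weierstrass genus two tail $C_1\cup_p C_2$ is determined by its two pointed genus two components, and the contraction should absorb the component $(C_1,p_1)$ on which the node is a Weierstrass point into an $A_4$ (ramphoid cusp) singularity placed at $p_2$, while retaining $(C_2,p_2)$; thus the fibre of $\Xi$ over a point of $\Xi(R)$ is the moduli $\M_2$ of such tails, and $\Xi(R)$ is the locus of genus four curves carrying an $A_4$ singularity. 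This matches the fact that $A_4$ is precisely the singularity whose stable reduction is a hyperelliptic genus two tail attached at a Weierstrass point. To make the morphism, I would show that \'etale-locally near $R$ the relevant modification is governed by the pointed moduli space $\M_{2,1}^{hs}$ of genus two curves with nodes, cusps and tacnodes, construct $\M_{2,1}^{hs}$ as a projective space, and identify it with Rulla's flip of $\M_{2,1}$ from \cite{Rulla}; the projectivity of that explicit flip supplies the missing semiampleness, and transporting its contraction globalizes to $\Xi:\Mhs\to\M_4(2/3)$.

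The main obstacle I anticipate is precisely this passage from nefness to a genuine contraction, not the numerical wall computation. Nefness alone produces no map, and a direct appeal to the basepoint-free theorem is awkward because $D_{2/3}$ is not visibly of the form $K_{\Mhs}+(\text{klt boundary})$ on this non-canonical birational model, so the explicit model $\M_{2,1}^{hs}$ and its identification with Rulla's flip are doing the essential work. Within that identification the delicate steps are proving that the cuspidal and tacnodal pointed genus two curves form a separated, proper, projective space, and matching its polarization with the restriction of $D_{2/3}$ (equivalently, checking that Rulla's flip and the $hs$-construction assign the same boundary modifications); I expect the effort to concentrate there.
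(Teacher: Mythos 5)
Your plan for part (1) has a genuine gap at its foundation: you propose to verify ampleness of $K_{\Mhs}+\a\dhs$ by testing it against ``an explicit list of candidate extremal curves'' and invoking Kleiman's criterion. Kleiman's criterion requires positivity on the entire closed cone of effective curves, and no description of that cone for $\Mhs$ is available (even for $\Mg$ this is the open F-conjecture), so positivity on a finite list of families proves nothing unless you separately show those families span the cone. The paper sidesteps this entirely: it writes $K_{\Mhs}+\a\dhs$ for $\a\in(2/3,7/10)$ as a positive combination of the divisor $K_{\Mhs}+(7/10-\e)\dhs$, already known to be ample from the GIT construction of $\Mhs$, and the divisor $K_{\Mhs}+\frac23\dhs$, which it proves is \emph{semiample}; ample plus semiample is ample, and part (1) follows with no cone computation. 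Relatedly, your worry that the basepoint-free theorem is ``awkward'' here is misplaced and deprives you of the tool that actually finishes the argument: $\Mhs$ has finite quotient singularities (hence is klt), and $a(K_{\Mhs}+\frac23\dhs)-K_{\Mhs}$ is a positive multiple of $K_{\Mhs}+\b\dhs$ with $\b\in(2/3,7/10)$ for $a\gg 0$, hence ample; so once $K_{\Mhs}+\frac23\dhs$ is nef, Kawamata--Viehweg basepoint freeness applies directly and yields the morphism $\Xi$.

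The second gap is that your proposal produces nefness (and, implicitly, sections) only in the direction of $\dhs_2$, via the identification of $\bar M_{2,1}^{hs}$ with Rulla's flip $Y$ --- which is indeed the paper's key Proposition~\ref{P:Y}, and you have correctly isolated it together with the slice reduction on $\bar M_{2,1}^{hs}\times\bar M_{2,1}^{hs}$. But nefness must be checked on all of $\Mhs$, and ``transporting'' Rulla's contraction from a neighborhood of $\dhs_2$ does not globalize by itself. The paper needs two further inputs you do not supply: on $\Mhs\setminus(\dhs_2\cup\Hhs)$ it produces nonvanishing sections of (a multiple of) $K_{\Mhs}+\frac59\dhs$ by mapping to the GIT quotient $\chow41$ of canonically embedded genus four curves and invoking Kim's Chow stability results, then interpolates with the ample $K_{\Mhs}+(7/10-\e)\dhs$ to handle $\a=2/3$ there; and on the hyperelliptic locus $\Hhs$ (which canonical Chow GIT collapses to a point, so the previous argument fails) it cites the separate analysis of \cite{HL3}. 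Without an analogue of these steps your divisor could a priori fail to be nef, or fail to have sections, off $\dhs_2$. Your $A_4$/ramphoid-cusp description of the image of the contracted locus is a plausible modular guess but is neither needed for nor established by the theorem as stated.
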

A {\it  tail} of a curve $C$ is a subcurve
that meets the rest of the curve in one node.  It is called a {\it
Weierstrass tail} if moreover the subcurve is hyperelliptic and the attaching node is
fixed under the hyperelliptic involution. Abusing terminology, we
will call $C$ itself a (Weierstrass) genus two tail if it has a
(Weierstrass) genus two tail as a subcurve. Our proof uses the
results from \cite{CMJL} and \cite{Kim} on the GIT of canonically embedded
 genus four curves  and thus applies only
to the case $g = 4$, but  we believe that the statement is true
for any $g \ge 4$. In fact, one should be able to remove  the use of \cite{CMJL,Kim} by
constructing the GIT quotient space of {\it $6$-Hilbert semistable
curves}. Examination of the slope of the linearization  suggests
that $\M_g(2/3)$ is isomorphic to the GIT quotient of the Hilbert
scheme of the sixth Hilbert points of bicanonical curves. More
precisely, $K_{\bar{\mathcal M}_g} + \frac23 \d$, when  pulled
back to  the Hilbert scheme of $\nu$-canonical curves, is
proportional to the linearization \cite[Equation~(5.3)]{HH2} when
$\nu = 2$ and $m = 6$. But we have not been able to construct the
GIT quotient mainly because of our lack of understanding of the finite
Hilbert stability of curves. Recently, Ian Morrison and Dave
Swinarski made a breakthrough \cite{MS} constructing examples of small
genera curves that are $6$-Hilbert stable.
 Building upon this
work, we plan to carry out the GIT and give modular
interpretations of $\bar M_g(2/3)$ and the small contraction $\Xi$
for any $g \ge 4$.

To prove our main theorem, we also construct a new moduli space
$\M_{2,1}^{hs}$ of pointed curves of genus two that allows nodes,
ordinary cusps and tacnodes as singularities, and prove that it is
isomorphic to the flip of $\M_{2,1}$ given in
\cite[Section~3.9]{Rulla}\footnote{Rigorously what we want to say here
is the flip of $\M_{2,1}^{ps}$, but it will be clear from the context what we mean and there is no need to introduce a new moduli  space.}. It is constructed by using GIT, and the
method can be generalized to give new compactifications of
$M_{g,n}$ for any $g \ge 2$ and $n$. We hope to carry this out in a future work.

%Details will be given in our forthcoming work \cite{HLS}.

After this work appeared on the arXiv, several exciting results have been announced including the aforementioned \cite{CMJL} where the GIT of Chow point of canonical genus four curves is completely worked out via the GIT of cubic fourfolds. They show that the GIT quotient space is  isomorphic to $\M_4(5/9)$. In a recent article \cite{CMJL12}, they also obtain a complete list of log canonical models $\M_4(\a)$ for $\a \le 5/9$ (which ends with the final model $\M_4(29/60)$ worked out by Fedorchuk \cite{Fedorchuk}) by considering the GIT of canonical images as a complete intersection of a quadric and a cubic in $\bP^3$.
 Morrison and Swinarski's breakthrough work on finite Hilbert stability was followed by
\cite{AFS} where Alper, Fedorchuk and Smyth prove that a generic smooth curve, canonically or bicanonically embedded, has a semistable
$m$-th Hilbert point for all
$m\ge 2$. Their starting point is Kempf-Morrison theory after Morrison-Swinarski, but they work out a suitable basis for the (bi)canonical system by hand. By using an entirely different,
simple geometric argument, Fedorchuk and Jensen prove the GIT semistability of the 2nd Hilbert point of Gieseker-Petri general canonical curves \cite{FJ}.
The last step of the log minimal model program for $\M_4$ has recently been completed by Fedorchuk \cite{Fedorchuk} where he proves that $\M_4(\a)$ is isomorphic to the GIT quotient of the linear systems of $(3,3)$ curves on $\bP^1\times \bP^1$ for $8/17 < \a \le 29/60$, and that the birational map $\M_4 \dashrightarrow \M_4(29/60)$ contracts $\Delta_1, \Delta_2$ and the Petri divisor.

%Our proof heavily  depends  on a result in \cite{Rulla}, where he constructs a rational map
%\[
%\varphi_{\bar Q}: \bar M_{2,1} \dashrightarrow  \bar Q
%\]
%which contracts the loci of elliptic tails and of Weierstrass tails ($\bar W_2^1$ in his notation). Furthermore
%he proves that $\varphi_{\bar Q}$ becomes a regular map after a flip.
%In \S\ref{S:h-stable}, we construct  a compact moduli space $\bar M_{2,1}^{hs}$ of
% pointed curves that allow nodes, ordinary cusps and tacnodes as singularities, and prove that it is isomorphic to Rulla's flip. Via this isomorphism and a natural map from $\bar M_{2,1}^{hs} \to \bar M_4^{hs}$, we can pull back
% the relevant log canonical divisor to $\bar M_{2,1}^{hs}$ and then pass to Rulla's flip.

\begin{ack} We would like to thank Brendan Hassett for
sharing his ideas which run through this work, especially \S\ref{SS:delta2}. We greatly benefited from conversations with Dave Swinarski, from whom we learned the computation in \S\ref{S:polarization} among other things about the GIT of pointed curves. We also thank the anonymous referees who very carefully examined the article, pointed out errors and gave numerous suggestions.
The
first author was partially supported by the following grants funded by the government of Korea:
the Korea Institute for
Advanced Study (KIAS) grant,
NRF grant 2011-0030044 (SRC-GAIA), and NRF grant 2010-0010031. The second author was supported
by the NRF funded by the Korea government(MEST) (No. 2010-0008752).

\end{ack}

\section{h-stable curves and pointed curves}\label{S:h-stable}
In \cite{HH2}, Hassett and the first named author classified the bicanonically embedded curves of
genus $g \ge 4$ whose $m$th Hilbert point is GIT (semi)stable for $m
\gg 0$. In the genus four case (and this case only), semistable points are stable, and those are the {\it h-stable} curves $C$ characterized by the following properties:
\begin{enumerate}
\item $C$ has nodes, ordinary cusps and tacnodes as
    singularities;
\item a smooth rational component of $C$ meets the rest of the curve
    in $\ge 3$ points counting multiplicity;
\item a genus one subcurve of $C$ meets the rest of the curve
    in $\ge 3$ points counting multiplicity;
\item no {\it elliptic chain} is admitted.
\end{enumerate}

It  follows from this that the moduli stack $\bar{\mcl
M}_4^{hs}$ of h-stable curves of genus four is Deligne-Mumford and
the coarse moduli space $\bar M_4^{hs}$  has only finite quotient
singularities. We let $\dhs$ denote the divisor  of singular
curves in the moduli stack. It has two irreducible components
$\dhs_{irr}$ consisting of irreducible curves and $\dhs_2$
consisting of genus two tails. Since the morphism $\bar {\mcl
M}_4^{hs} \to \bar M_4^{hs}$ is unramified (as opposed to $\overline{\mathcal M}_g
\to \Mg$ which is ramified along $\delta_1$), we shall abuse
notation and use $\dhs_{irr}$ and $\dhs_2$ to denote the corresponding divisor classes in the
moduli space. In \cite[Theorem~9.1]{HH2}, it is shown that the
linearization on the Hilbert scheme used in the GIT quotient
construction of $\Mhs$ descends to (a positive rational multiple
of)
\[
K_{\Mhs} + (7/10 - \e) \dhs
\]
on $\Mhs$ for some small positive rational number $\e$. Thus
$K_{\Mhs} + (7/10 - \e) \dhs$ is an ample $\bQ$-divisor on $\Mhs$. By using elementary properties of global sections of line bundles under birational maps
(see for instance, \cite[Lemma~2.2, Proposition~2.3]{Simpson}),  we conclude that:
\begin{lemma} There is a natural isomorphism
\[
\bar M_4(2/3) \simeq Proj \oplus_{n \ge 0} \Gamma\left(\bar M_4^{hs}, n\left(K_{\bar M_4^{hs}} + \frac23 \dhs\right)\right).
\]
\end{lemma}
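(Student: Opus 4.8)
The plan is to unwind the definition of the left-hand side and then transport the section ring across the birational maps of the program until it lands on $\bar M_4^{hs}$. Extending the formula of the introduction to the value $\a = 2/3$, we have by definition
\[
\bar M_4(2/3) = \proj \oplus_{n\ge 0}\Gamma\left(\bar M_4, n(\KFMg + \tfrac23\d)\right),
\]
so it suffices to produce a graded isomorphism between this section ring and $\oplus_{n}\Gamma(\bar M_4^{hs}, n(K_{\bar M_4^{hs}} + \tfrac23\dhs))$, since $\proj$ of isomorphic graded rings agree. I would factor the birational map $\bar M_4 \dra \bar M_4^{hs}$ as the divisorial contraction $T\colon \bar M_4 \to \bar M_4^{ps}$ followed by the flip $\bar M_4^{ps}\dra\bar M_4^{hs}$ of the introduction's diagram, and compare the divisors one step at a time.

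The first step handles $T$, whose exceptional divisor is exactly $\d_1$: it is contracted to the codimension-two cuspidal locus, with fibres the $j$-lines $B\isom\bar M_{1,1}$. The difference $\KFMg + \a\d - T^*(K_{\bar M_4^{ps}} + \a\,\dps)$ is $T$-exceptional, hence a multiple $c(\a)\,\d_1$; using $\KFMg = 13\la - 2\d$ together with $\la\cdot B = 1/24$, $\d_{irr}\cdot B = 1/2$, $\d_1\cdot B = -1/24$ and $T^*D\cdot B = 0$, intersecting with $B$ determines
\[
c(\a) = 9 - 11\a,
\]
in agreement with the known critical value $\a = 9/11$ of the contraction. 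At $\a = 2/3$ this equals $5/3 > 0$, so the correction term $\tfrac53\d_1$ is effective and $T$-exceptional; since the pushforward along $T$ of the structure sheaf twisted by an effective exceptional divisor is again the structure sheaf, we obtain $\Gamma(\bar M_4, n(\KFMg + \tfrac23\d)) = \Gamma(\bar M_4^{ps}, n(K_{\bar M_4^{ps}} + \tfrac23\dps))$ for all $n$. This positivity is the substance of the divisor comparison recorded in \cite[Lemma~2.8]{HH2} and is the crux of the argument: for $\a < 9/11$ the log canonical divisor acquires a positive discrepancy along $\d_1$, so $\d_1$ contributes no sections.

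The second step passes through the flip $\bar M_4^{ps}\dra\bar M_4^{hs}$, which is an isomorphism in codimension one; hence the classes $K$ and $\d$, and so $K_{\bar M_4^{ps}} + \tfrac23\dps$, go to their strict transforms $K_{\bar M_4^{hs}} + \tfrac23\dhs$ (with $\dps_{irr},\dps_2$ matching $\dhs_{irr},\dhs_2$), and the two section rings coincide because the divisors differ only along the codimension-two flipped loci. Composing the two identifications yields the desired graded isomorphism, and applying $\proj$ gives the lemma. That $K_{\bar M_4^{hs}} + \a\,\dhs$ is the correct divisor to compare against, being proportional to the descended GIT polarization used to construct $\bar M_4^{hs}$, is \cite[Equation~(9.1)]{HH2}. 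The main obstacle is the discrepancy computation of the second paragraph together with the verification that the $7/10$ flip is genuinely small on both sides; for $g = 4$ both are supplied by the cited results of \cite{HH2}.
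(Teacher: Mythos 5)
Your argument is correct and is essentially the paper's: the paper gives no independent proof but simply invokes \cite[Lemma~2.8 and Equation~(9.1)]{HH2}, and your two steps (positive discrepancy $c(\a)=9-11\a$ along $\d_1$ across the divisorial contraction at $\a=9/11$, then invariance of section rings under the small modification at $\a=7/10$ by normality) are precisely the content of that citation. The discrepancy coefficient and the intersection numbers with the elliptic-tail test curve check out, so nothing further is needed.
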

We shall therefore study the map associated to $K_{\bar M_4^{hs}}
+ 2/3 \dhs$. An overarching principle of the Hassett-Keel program is that the
intermediate maps contract the boundary divisors $\Delta_1, \Delta_2,
\dots$ one by one (up to the point where $\Delta_i$ is of general type), so we seek extremal rays for $K_{\bar M_4^{hs}}
+ \frac23 \dhs$ generated by a curve in the boundary divisor
$\dhs_2$. For this purpose, we shall use  the degree two finite map
\begin{equation}\label{E:m21tom4}
\bar M_{2,1} \times \bar M_{2,1} \to \Delta_2 \inj \bar M_4
\end{equation} and pass to $\bar M_{2,1}$, whose birational geometry has been
extensively studied in \cite{Rulla}. But to use Rulla's results in
studying the birational geometry of $\bar M_4^{hs}$, we need to
have a map for $\bar M_4^{hs}$  analogous to (\ref{E:m21tom4})
above, which breaks $\dhs_2$ into two components. Since such component curves may have an ordinary cusp
or a tacnode, we need to construct a compact moduli space of
pointed curves allowing these singularities. This is the task we
carry out in  \S\ref{S:pointed-hs}. Here, we sketch the
construction and summarize the main properties of the new moduli
space.

\subsection{GIT setup and the stability condition}
Let $ C_1\cup_pC_2$ be a curve in $\dhs_2$, where $C_i$ are of
genus two. Restricting the polarization $\omega_C^{\otimes 2}$ to $C_i$ yields
$\omega_{C_i}^{\otimes 2}(2 p)$, which leads us to consider the
GIT of pointed genus two curves $(C, p)$  embedded in $\bP^4$ by
the complete linear system $|\omega_C^{\otimes 2} (2p)|$. We say
that $(C,p)$ is \textit{$\nu$-log-canonically embedded} if $C$ is
embedded in $\bP^N$ by $|\omega_C^{\otimes \nu}(\nu p)|$, $N+1 =
(2\nu-1)(g-1) + \nu$. In our  $\nu = 2$ case, $(C, p)$ is said to
be {\it bi-log-canonically embedded}. We recall the basic GIT setup
from \cite{Swin}. Let $d  = 6$, $N + 1 = 5$, and $P(m) = 6m - 1$.
Let $\hilb$ be the Hilbert scheme parametrizing the subschemes of
$\bP^N$ whose Hilbert polynomial is $P(m)$. Let $J$ denote the
subscheme of $\hilb \times \bP^N$ consisting of  the points
associated to bi-log-canonical pointed curves $(C, p)$ such that $C$ is
smooth and $p \in C$ is a smooth point.
The parameter space we use is
$\bar J$, the closure of $J$ inside $\hilb \times \bP^N$,
linearized by
\begin{equation}\label{E:balanced}
\mcl L_m := 3 \det \pi_*\cO_{\mcl C}(m) + 2m^2
 \sigma^*\cO_{\mathcal C}(1)
\end{equation}
where $\pi : \mcl C \to \bar J$ is the universal curve and $\sigma : \bar J \to \mathcal C$, the universal section. Here, the
balancing factor $2m^2/3$ was chosen so that the GIT quotient
space parametrizes exactly the pointed curves that we desire. Such
a balancing factor was employed in \cite{BS} and \cite{Swin}, and
for $\nu$-log-canonical pointed curves, it turns out that $\nu m^2/ (2\nu -1)$
works. We say that $(C,p)$ is {\it
Hilbert $($semi$)$stable} if the corresponding point in $\bar J$ is
GIT (semi)stable with respect to the standard $SL_{N+1}$ action
and
 $\mcl L_m$ for all $m \gg 0$.

\begin{theorem}\label{T:hs} A bi-log-canonical pointed curve $(C, p)$ of genus two is Hilbert stable if and only if it is  {\it h-stable} i.e. it satisfies the following properties:
\begin{enumerate}
\item $C$ has nodes, ordinary cusps and tacnodes as
    singularity;
\item A rational component of $C$ has $\ge 3$ special points
    counting multiplicity;
\item $C$ does not have a subcurve of genus one.
\item $p$ is a smooth point.
\end{enumerate}
Here, special points mean singular points or marked points.
\end{theorem}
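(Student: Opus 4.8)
The plan is to verify the Hilbert--Mumford numerical criterion directly for the balanced linearization $\mcl L_m$. Fix a nontrivial one-parameter subgroup $\lambda$ of $SL_{N+1}$ and choose coordinates $x_0, \dots, x_N$ on $\bP^N$ diagonalizing $\lambda$ with weights $r_0 \ge r_1 \ge \cdots \ge r_N$, normalized so that $\sum_i r_i = 0$. Because $\mcl L_m$ is the sum of the Hilbert-point linearization (with coefficient $3$) and the point linearization $p_2^*\cO_{\bP^N}(1)$ (with coefficient $2m^2$), the total weight splits as
\begin{equation}
\mu_{\mcl L_m}\bigl((\hilb_C, p), \lambda\bigr) = 3\, w_\lambda(\hilb_C, m) + 2m^2\, w_\lambda(p),
\end{equation}
where $w_\lambda(\hilb_C, m)$ is the $\lambda$-weight of the $m$th Hilbert point of $C$ and $w_\lambda(p) = \min\{ r_i : x_i(p) \ne 0\}$ is the weight of the marked point. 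Hilbert stability of $(C,p)$ amounts to showing that this quantity is negative for every such $\lambda$ once $m \gg 0$, so it suffices to produce, for each $\lambda$, a bound showing that the leading ($m^2$) coefficient is strictly negative.

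Next I would estimate the leading term of $w_\lambda(\hilb_C, m)$ by the classical method of Mumford and Gieseker, as adapted to this setting in \cite{BS} and \cite{Swin}. The subgroup $\lambda$ induces a weighted filtration of $V = H^0(\bP^N, \cO(1))$ and hence a flat degeneration of $C$; the weight $w_\lambda(\hilb_C, m)$ is computed, to leading order in $m$, as a sum over this filtration of local contributions from the subcurves and from the singular points at which weight concentrates. The aim is to bound $w_\lambda(\hilb_C, m)$ from above by an expression in the $r_i$ and in the degrees of the subcurves $C \cap \{x_0 = \cdots = x_k = 0\}$, using that $C$ is embedded by $|\omega_C^{\ten 2}(2p)|$, so that the degree of each subcurve is controlled by its arithmetic genus and its intersection with the rest of $C$.

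The decisive geometric input is the list of h-stability hypotheses, which I would feed into the estimate one at a time. Condition (2), that every rational component carries at least three special points, rules out the one-parameter subgroups concentrating positive weight on a rational tail, which are exactly the destabilizing directions in Mumford's unpointed analysis. Condition (3), the absence of a genus one subcurve, plays for $g=2$ the role that the elliptic-tail condition plays in the construction of $\bar M_4^{hs}$: it eliminates the degenerations that would otherwise render the Hilbert weight nonnegative. Condition (1) bounds the worst-case local contribution at a singular point, the tacnode being the extremal case. Condition (4), that $p$ is smooth, together with the balancing coefficient $2m^2/3$, controls the sign of $w_\lambda(p)$ relative to the Hilbert weight: the balancing factor was chosen precisely so that the marked point contributes enough to destabilize exactly those $(C,p)$ violating the h-stability conditions and no others.

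The main obstacle I anticipate is the tacnodal case combined with the tightness of the balancing factor. Since $2m^2/3$ is the critical value at which the polarization $K + \frac23 \delta$ meets the linearization, the numerical inequalities are sharp rather than slack, so a crude bound on the local weight at a tacnode, or a crude estimate of $w_\lambda(p)$ when $p$ specializes toward a tacnode, will not suffice. I would therefore need to compute the leading coefficient of the weight polynomial at a tacnode exactly, track how the marked point's weight enters when $\lambda$ is adapted to a tacnodal degeneration, and check that strict negativity persists. This is where the genus-two-specific analysis, rather than a general-genus argument, becomes essential, and where I expect the bulk of the technical work to lie.
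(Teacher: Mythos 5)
Your plan — verify the Hilbert--Mumford criterion directly for every one-parameter subgroup by bounding the leading coefficient of the weight polynomial — is not the paper's route, and it contains a step that fails exactly where you anticipate trouble. The reduction ``it suffices to show the leading ($m^2$) coefficient is strictly of the stable sign'' is false for h-stable curves with a tacnode. For the 1-PS adapted to the tacnodal degeneration (the $\rho$ with weights $(3,-2,-7,3,3)$ coming from the automorphisms of the tacnodal elliptic bridge $C^\star = R_0\cup_y R_1\cup R$), the $m^2$ terms cancel identically: the paper computes $\mu^{\mcl L_m}((C^\star,p),\rho) = 2m^2 - 7m + 5 - 2m^2 = -7m+5$, so the leading coefficient --- which is precisely the Chow weight for the balanced Chow linearization --- is \emph{zero}. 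This is why tacnodal curves and elliptic bridges are only Chow \emph{strictly semistable} (Corollary~\ref{C:eb-Chow}), and why Hilbert stability of an h-stable tacnodal curve is decided at linear order in $m$, by the sign of $\pm(7m-5)$. No estimate on the $m^2$ coefficient, however exact, can close this case; and to control the linear term you would first need to know that the \emph{only} one-parameter subgroups with vanishing leading term are the powers of $\rho$ stabilizing $C^\star$, which is itself a classification statement about strictly semistable Chow points and their basins of attraction.

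The paper avoids the direct verification entirely, and you should note how different the architecture is. Its ingredients are: (i) Swinarski's theorem that \emph{smooth} bicanonical pointed curves are Hilbert stable; (ii) a complete list of destabilizing one-parameter subgroups ruling out everything that is not c-semistable (the ``easy'' direction, in \S\ref{S:unstable}); (iii) the pointed analogues of Mumford's Propositions~5.5 and its corollary (Propositions~\ref{P:balance} and \ref{P:limit}), which pin down the polarization on a semistable limit; (iv) the semistable replacement theorem applied to a smoothing of a given h-stable $(C,p)$, identifying the replacement's special fibre with $(C,p)$ via the classification in (ii) and the basin-of-attraction analysis at $(C^\star,p)$; and (v) the final contradiction argument for tacnodal h-stable curves using $\mu^{\mcl L_m}(\cdot,\rho^{-1}) = 7m-5 > 0$. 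Your proposal uses none of (i), (iii), (iv); without them, proving stability (as opposed to instability) for all one-parameter subgroups on a singular bicanonical curve at the critical balancing factor is an open-ended computation, not a proof.
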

We will prove this theorem in \S\ref{S:pointed-hs}. We shall let $\M_{2,1}^{hs}$ denote the moduli space of Hilbert stable bi-log canonical pointed curves.

\subsection{Product decomposition of the boundary}
Since h-semistable pointed curves have finite automorphisms, there is no strictly
semistable point and the quotient space $\bar M_{2,1}^{hs}$ has
finite quotient singularities. Consequently the corresponding
moduli stack is Deligne-Mumford. As in the case of $\bar{\mathcal M}_4$, we have the morphism
\[
\bar {\mcl M}_{2,1}^{hs} \times \bar {\mcl M}_{2,1}^{hs}
 \to \bar {\mcl M}_{4}^{hs}
 \]
of stacks given by attaching two h-stable pointed curves at the
marked points forming a node. This leads to the following straightforward
generalization of \cite[Lemma~1.1]{GKM}. This follows from the line bundle decomposition formula \cite{faber-1997}, and it is straightforward to check that the formula also works for the h-semistable case.

\begin{lemma}\label{L:slice} An effective curve $C$ in the product $\bar M_{2,1}^{hs} \times \bar  M_{2,1}^{hs}$ is numerically equivalent to the sum  of curves $C_1\times\{pt\} + \{pt\}\times C_2$ where $C_i$ is obtained by projecting $C$ to the $i$th factor. In particular, if a line bundle on the product is nef on each slice $\bar M_{2,1}^{hs} \times pt$ and $pt \times \bar M_{2,1}^{hs}$ of the product, then it is nef.
\end{lemma}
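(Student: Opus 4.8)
The plan is to reduce the nef criterion to a numerical decomposition of curve classes on the product, and to obtain that decomposition from the corresponding splitting of divisor classes. Write $X = \bar M_{2,1}^{hs}$, so that we work on $X \times X$ with projections $p_1, p_2$, using $\bQ$-coefficients throughout (this is harmless, since the finite quotient singularities of $X$ make every Weil divisor $\bQ$-Cartier and give a perfect intersection pairing $N_1 \times N^1 \to \bQ$). Since $X$ is a GIT quotient of (an open subset of) a Hilbert scheme it is unirational, so a resolution of $X$ has irregularity zero; as $X$ has only finite quotient singularities this yields $H^1(X, \bQ) = 0$. The K\"unneth formula then gives $H^2(X \times X, \bQ) = p_1^* H^2(X, \bQ) \oplus p_2^* H^2(X, \bQ)$, because the cross term $H^1(X) \otimes H^1(X)$ vanishes. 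Restricting to classes of type $(1,1)$ proves the key input
\[
N^1(X \times X)_\bQ = p_1^* N^1(X)_\bQ \oplus p_2^* N^1(X)_\bQ .
\]

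Next I would dualize. Given an effective curve $C \subset X \times X$, set $C_i := p_{i*}[C] \in N_1(X)_\bQ$, an effective $1$-cycle (the reduced image weighted by the degree of $C \to p_i(C)$, and zero when $p_i$ contracts $C$). I claim $[C] \equiv C_1 \times \{pt\} + \{pt\} \times C_2$. By the displayed splitting it suffices to test both sides against divisors $p_1^* D$ and $p_2^* E$ with $D, E \in N^1(X)_\bQ$. For $p_1^* D$ the projection formula gives $C \cdot p_1^* D = p_{1*}[C] \cdot D = C_1 \cdot D$, while on the right $(C_1 \times \{pt\}) \cdot p_1^* D = C_1 \cdot D$ and $(\{pt\} \times C_2) \cdot p_1^* D = 0$ since the second summand is contracted by $p_1$; the case of $p_2^* E$ is symmetric. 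As numerical equivalence of $1$-cycles is detected by pairing against all divisor classes, the claim follows.

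For the nef criterion I would then take a line bundle $L$ on $X \times X$ that is nef on each slice $X \times \{pt\}$ and $\{pt\} \times X$, and let $C$ be an arbitrary irreducible curve. The decomposition gives $L \cdot C = L \cdot (C_1 \times \{pt\}) + L \cdot (\{pt\} \times C_2)$. Identifying $X \times \{pt\}$ with $X$, the first term equals $(L|_{X \times \{pt\}}) \cdot C_1 \ge 0$ because $L$ is nef on the slice and $C_1$ is effective there, and the second term is $\ge 0$ by the same argument; hence $L \cdot C \ge 0$ for every irreducible curve and $L$ is nef.

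The main obstacle is the first step, the clean splitting of $N^1(X \times X)$. Its content is the vanishing of the ``mixed'' correspondence classes $\Hom(\tu{Alb}\,X, \Pic^0 X)$, which is exactly what $H^1(X, \bQ) = 0$ provides; the only care needed is to push the irregularity computation through the finite quotient singularities of $X$, which is routine since such singularities are rational and one may argue on a resolution or, equivalently, on the smooth Deligne--Mumford stack $\bar{\mcl M}_{2,1}^{hs}$. Everything after that is formal duality via the projection formula, exactly as in \cite[Lemma~1.1]{GKM}.
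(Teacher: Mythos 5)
Your proof is correct and follows essentially the same route the paper intends: the paper gives no argument of its own beyond declaring the lemma a straightforward generalization of \cite[Lemma~1.1]{GKM}, and the GKM proof is precisely your combination of the splitting $N^1(X\times X)_\bQ = p_1^*N^1(X)_\bQ\oplus p_2^*N^1(X)_\bQ$ (from the vanishing of $H^1$, available here since $\bar M_{2,1}^{hs}$ is unirational with finite quotient singularities) with duality via the projection formula. No further comment is needed.
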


\subsection{Polarization on the moduli space}\label{S:polarization}
The linearization on $\bar J$ descends to an ample line bundle on
the GIT quotient of $\bar J$ by a suitable special linear group, which can be written in terms
of tautological classes on the quotient space. Recall that
$\pi : \mcl C \to \bar J$ is the universal curve and let $\psi$ denote
the bundle of cotangent lines. In the polarization formula below,
the first term $\det \pi_*\cO_{\mathcal C}(m)$ comes from the
curve part and the second, the point part. We follow \cite[Page
106]{Mum} and write $\det \pi_*\cO_{\mathcal C}(m)$ in terms of
the Hodge class $\lambda$, the boundary divisor $\delta_{irr}$ of
irreducible curves and the bundle $\psi$ of cotangent lines. Mumford's formula can be modified without too much difficulty to give a polarization formula for the pointed case, from which we get
\[
\begin{array}{lll}
\det \pi_*\cO_{\mathcal C}(m) + \frac23m^2 Q
 =  (2m^2-m)(\tilde{\kappa} + \psi) + \lambda + (m(6m-1) + \frac23m^2) Q \\
\end{array}
\]
where $Q = -\frac 15 (\tilde \kappa + \lambda + \psi)$ satisfying $\cO_{\mathcal C}(1) = \omega_{\mathcal C}^{\otimes 2}(2\sigma) \otimes \pi^*Q$. Using the
relations
\[
\begin{array}{c}
\delta_1 = 5\lambda - \frac12 \delta_{irr}\\
\tilde \kappa = 12 \lambda - \delta = 7\lambda - \frac12 \delta_{irr}\end{array}
\]
we may rewrite $Q = -\frac15 (8\lambda + \psi - \frac 12
\delta_{irr})$ and obtain \small

\begin{equation}\label{E:polform}
\left(\frac{10}3 m^2 - \frac{27}5 m + 1\right) \lambda + \left(\frac23 m^2 - \frac45 m\right)\psi - \left(\frac13 m^2 - \frac25m\right)\delta_{irr}
 \sim (10 - \e(m)) \lambda + 2\psi - \delta_{irr}
 \end{equation}
\normalsize
where $\epsilon(m) = \frac{21m - 150}{100 m^2 - 120 m}$ tends to
$0$ as $m \to \infty$.

\vskip .2in

The results summarized in this section will be crucial in the proof of the main Theorem~\ref{T:1}  in the subsequent section.

\section{Construction of the birational morphism $\Xi$}
We shall first prove that $|\ell (K_{\Mhs} + 2/3 \dhs)|$ is
basepoint free for sufficiently large and divisible $\ell$. Let $H' = (\Psi^{+})^{-1}(\overline H_4^{cs})$ where $\Psi^{+} : \M_4^{hs} \to \M_4^{cs}$ is the small contraction that is the flip to $\Psi : \M_4^{ps} \to \M_4^{cs}$ \cite[Theorem~2.12]{HH2} and $\overline H_4^{cs}$, the closure in $\M_4^{cs}$ of the locus of smooth hyperelliptic curves.

We
shall accomplish this by showing that
\begin{itemize}
\item[(i)]  $K_{\Mhs} + 2/3 \, \dhs$ is nef on $H'$.
\item[(ii)]   $|\ell(K_{\Mhs} + 2/3 \, \dhs)|$ gives rise to an
    isomorphism on $\Mhs\setminus (\dhs_2\cup H')$;
\item[(iii)] $K_{\Mhs} + 2/3 \, \dhs$ is nef on $\dhs_2$;

\end{itemize}
Since (i) $\sim$ (iii) together imply that $ K_{\Mhs} + 2/3 \,
\dhs$ is nef, the basepoint freeness follows by Kawamata-Viehweg
basepoint freeness theorem \cite[3.3]{KM}. To apply the basepoint freeness theorem, we need to check that the pair $(\Mhs, 2/3\, \dhs)$ is klt. For $g=4$, one can use the deformation theory worked out in \cite[Proposition~6.1, Corollary~6.3]{HH2}
and follow the proof of \cite[Theorem~5.2]{DM} to show that that the moduli stack of h-stable curves is smooth, from which it follows that $(\Mg^{hs}, \a\dhs)$ is smooth for $0\le \alpha < 1$. But, instead of providing details for this argument involving the moduli stack of h-stable curves, we shall give an alternate proof that works in all genera.

First, $(\Mg, \a\d)$ is klt for $0 \le \alpha < 1$ since the moduli stack $\overline{\mathcal M}_g$ is smooth and $\delta$ is normal crossings. See \cite[Proposition~A.9, A.13]{HH1}. Recall from \cite{HH2} that running the log MMP produces a divisorial contraction $T: \Mg \to \Mg^{ps}$, a small contraction $\Psi : \Mg^{ps} \to \Mg^{cs}$ and its flip $\Psi^+: \Mg^{hs}\to \Mg^{cs}$ where $\Mg^{ps}$ and $\Mg^{cs}$ are the moduli spaces of pseudo-stable curves and of c-semistable curves, respectively. We point the readers to \cite{HH2} for definitions and details. For our purpose here, we are only concerned with the fact that
\[
\begin{array}{rcl}
(\Psi^+)^*(K_{\Mg^{cs}} + 7/10\, \d^{cs}) & = & K_{\Mg^{hs}}+ 7/10\, \d^{hs}  \\
\Psi^*(K_{\Mg^{cs}} + 7/10\, \d^{cs}) & = & K_{\Mg^{ps}}+ 7/10\, \d^{ps}
\end{array}
\]
and the log discrepancy formula
\[
T^*(K_{\FMg^{ps}} + \alpha \, \d^{ps}) = K_{\FMg} + \alpha \, \d + (11\alpha - 9) \, \d_1.
\]
The log discrepancy formula in conjuction with the previously observed fact that $(\Mg, \alpha \,\d)$ is klt for $0 \le \alpha <1$  implies that $(\Mg^{ps}, 9/11 \, \delta^{ps})$ is klt (and hence so for $0 \le \alpha \le 9/11$ and in particular, for $\alpha = 7/10$). Subsequently, \cite[Lemma~3.10]{Kollar} imply that $(\Mg^{hs}, 7/10 \, \d^{hs})$ is klt, and therefore the pair is klt for $0 \le \alpha \le 7/10$.

\

\subsection{$K_{\Mhs}+2/3\, \dhs$ is nef on $H'$} It is shown in \cite[Section~3.3]{HL3} that $K_{\M_4^{cs}} + \alpha \delta^{cs}$ pulls back to a line bundle that is ample on $\overline H_4(7/10)$ for $2/3 < \alpha \le 7/10$. There is an error in \cite[Section~3.3]{HL3} but the results in Section 3.3 and \cite[Theorem~1.(2)]{HL3} are valid for $g = 4$. We will briefly explain this and justify our use of the results in this paper. The error is in the assertion that $L_\alpha +\frac{11\a-9}2 \widetilde B_3 + (10\alpha - 7) \widetilde B_4$ is nef for $\alpha = 2/3$. Here, $L_{\alpha}$ denotes the restriction (pull-back) of the log canonical divisor $K_{\FMg} + \a\d$ to $\overline H_g$, and $L_\alpha +\frac{11\a-9}2 \widetilde B_3 + (10\alpha - 7) \widetilde B_4$ is the pullback of the cycle image $L^{''}_{\alpha}$ of $L_\alpha$ in $\overline H_4(7/10)$ under $\overline H_4 \to \overline H_4(7/10)$ \cite[Lemma~2]{HL3}.
 In general, $L^{''}_\alpha$  intersects the F-curves $\{1,1,2s+1,2g-2s-1\}$ negatively for $2 \le s \le \lfloor g/2 \rfloor$ and for $\alpha < 7/10$. These F-curves correspond to the disconnecting elliptic bridges.
When $g = 4$, for genus reasons the only disconnecting elliptic bridge is the F-curve $\{1, 1, 3, 5\}$ ($s=1$ case), and the intersection of $L^{''}_\alpha $ with this F-curve is
\[
5\alpha - \frac72 + \frac{11\alpha-9}2(-1) +(10\alpha-7)\cdot 2 = 13\left(\frac32 \alpha - 1\right).
\]
Hence, as asserted in \cite[Section~3.3]{HL3}, $L^{''}_{\alpha}$ is ample on $\overline H_4(7/10)$ for $2/3 < \alpha \le 7/10$ and nef for $2/3 \le \alpha \le 7/10$.
It follows that $\overline H_4(\a)$'s are isomorphic to $\overline H_4(7/10)$  for $2/3 < \a < 9/11$, and by \cite[Proposition~1]{HL3}, $\overline H_4(7/10)$ is isomorphic to $\overline H_4^{cs}$. Since $K_{\M_4^{cs}} + 2/3 \delta^{cs}$ is a positive rational multiple of $L^{''}_{2/3}$, it is nef on $ \overline H_4^{cs}$, and subsequently $K_{\Mhs} + 2/3 \dhs = (\Psi^+)^*(K_{\M_4^{cs}} + 2/3 \delta^{cs})$ is nef on $H'$.
Why is $\{1,1,3,2g-3\}$ different from the other F-curves? What happens is, since $\{1,1,3,5\}$ is an elliptic tail, the image $C_{1,1,3}$ of this F-curve under the first divisorial contraction $T: \M_4 \to \M_4^{ps}$ is a union of a genus two curve and another with an ordinary cusp attached nodally at a Weierstrass point. In particular, it is no longer an elliptic bridge, and the locus of these curves misses the extremal loci of the first small contraction and of its flip.
Abusing notation, we can think of $C_{1,1,3}$ as a curve in $\delta_2^{cs} \subset \M_4^{cs}$ (or $\delta_2^{hs} \subset \M_4^{hs}$). We shall prove in Section~\ref{SS:delta2} that  $K_{\Mhs}+2/3\, \dhs$ is nef on $\dhs_2$, which is consistent with the results in this section as far as $C_{1,1,3}$ is concerned.

\

By considering the c-semistable replacements of the stable hyperelliptic curves, we conclude that $H'$ has two components: the strict transformation $\overline H_4^{hs}$ of $\overline H$ in $\Mhs$, and the locus $\tac0$ consisting of h-stable curves $C$ with a tacnode $p$ normalized by $\nu : D \to C$, $\nu^{-1}(p) = \{p_1, p_2\}$, such that $p_i$ are interchanged by the hyperelliptic involution of $D$. The argument above shows that $K_{\Mhs} + 2/3 \dhs$ non-negatively intersects with any curve in $H'$, whether it lies in $\Hhs$ or $\tac0$. Indeed, the only difference between members in $\Hhs$ and those in $\tac0$ is the tangent space identification data for the tacnode, which is not visible in $\M_4^{cs}$ i.e. any two tacnodal curves with the same (partial) normalization of the tacnode are identified in $\M_4^{cs}$ (see \cite[Lemma~9.2, Proposition~9.6 and its proof]{HH2}). Hence a curve $Z \subset \tac0$ maps into $\overline H_4^{cs}$ on which $K_{\M_4^{cs}}+2/3\delta^{cs}$ is nef, so $K_{\M_4^{hs}}+2/3 \delta^{hs} = (\Psi^+)^*(K_{\M_4^{cs}}+2/3\delta^{cs})$ non-negatively intersects $Z$.

Yet another way to see this is that any curve in $\tac0$ can be deformed to a curve in $\Hhs$ with base a smooth rational curve. To fully justify this argument should require some work, but we sketch the idea how it goes.
Given a hyperelliptic curve $D$ and two conjugate smooth points $p_1, p_2$, there is a unique tacnodal curve $(C, p) \in \Hhs$ normalized by $(D, p_1,p_2)$, and any other tacnodal curve normalized by $(D,p_1,p_2)$ can be deformed to $(C,p)$ just by varying the tangent space identification used to form the tacnode. This can be done for families too. A family of curves $(\mathcal C, \tau) \to B$ in $\tac0$ is obtained from a family $\varpi: \mathcal D \to B$ of genus two curves, two conjugate sections $\sm_1, \sm_2$, and a tangent space identification data $\gamma(t) : T_{\sm_1(t)}\mathcal D_t \isomto T_{\sm_2(t)}\mathcal D_t$ used to form the tacnode $\tau : B \to \mathcal C$. Let $\psi_i := \sigma_i^*(\omega_\varpi)$ be the bundle of cotangent lines over $B$ along $\sm_i$ i.e. $\psi_i|_t = \omega_{\mathcal D_t}|_{\sm_i(t)}$ for any $t \in B$. Then $\gamma$ is just a nowhere vanishing section of $\psi_1\otimes \psi_2^*$.
And of course, in the space of nowhere vanishing sections of the line bundle $\psi_1\otimes \psi_2^*$, we can deform $\gamma$ to the special $\gamma^\star$ which gives rise to the unique curve in $\tac0$.

\subsection{ $K_{\Mhs} + 2/3 \, \dhs$ gives rise to an isomorphism
over $\Mhs\setminus (\dhs_2\cup H')$.}\label{S:isom} Let $\e$ be a
small positive rational number. Note that we can write
\begin{equation}\label{E:lincomb}
K_{\Mhs} + 2/3 \, \dhs = a \left(K_{\Mhs} + (7/10 -\e) \, \dhs\right)
+ b \left(K_{\Mhs} +  5/9 \, \dhs\right)
\end{equation}
with positive coefficients $a = 10/(13 - 90 \e)$ and $b = (3 - 90\e) / (13 - 90\e)$. Since $K_{\Mhs} + (7/10 -\e) \, \dhs$ is ample for small $\e$, it
suffices to show that $K_{\Mhs} +  5/9 \, \dhs$ is generated by sections away from $ \dhs_2\cup H'$. For this end, we exploit the
moduli space $\chow_{4,1}/\!\!/SL_4$ of Chow semistable canonical
curves of genus four.  This space was carefully studied by Casalaina-Martin-Jensen-Laza \cite{CMJL}, and H. Kim
\cite{Kim} also achieved some partial results, including
\begin{theorem}\label{T:CMJL}\cite[Theorem~3.1]{CMJL} A complete connected curve of genus four
has a Chow semistable (resp. Chow stable) canonical image if it is
h-stable and not contained in $\dhs_2\cup H'$ (resp. $\dhs_2\cup H'\cup {\rm Tac}$), where ${\rm Tac}$ denotes   the locus of the tacnodal curves.
\end{theorem}

\begin{corollary} There is a birational map $\Theta: \Mhs \dashrightarrow \M_4(5/9)$ that is regular away from $\dhs_2\cup H'$.
\end{corollary}

Since $\Mhs$ has only finite quotient singularities, it is normal. In fact, $\Mg^{hs}$ is normal in general since it is a GIT good quotient of the locus of h-stable curves in the Hilbert scheme which is smooth  by \cite[Proposition~6.1, Corollary~6.3]{HH2}. Likewise, $\M_4(5/9)$ is normal \cite[p18]{CMJL}. It follows that $\Theta$ extends to a birational map regular away from a codimension two locus, and $\Theta^*\mathcal O(+1)$ is well defined. In the proof of \cite[Theorem~4.1]{CMJL}, it is shown that $\Theta^*\mathcal O(+1)$ is a positive rational multiple of
$9\lambda - \delta^{hs} - 2\delta_2^{hs}$ \cite[Theorem~4.1]{CMJL}.

 Now, let $D$ be an h-stable curve  not in
$\dhs_2 \cup H'$. Due to Theorem~\ref{T:CMJL}, $\Theta$ is regular at $D$.  Since $K_{\Mhs} + 5/9 \dhs$ is proportional to $9 \lambda - \dhs$, it follows that there is a section of (a sufficiently large
and divisible multiple of) $K_{\Mhs} + 5/9 \dhs$ that does not
vanish at the point representing $D$. Since $K_{\Mhs} + (7/10 -
\e) \dhs$ is ample, it follows that the linear combination
(\ref{E:lincomb}) gives rise to an isomorphism on $\Mhs \setminus
(\dhs_2\cup H')$.

\subsection{$K_{\Mhs} + 2/3 \dhs$ is nef on
$\dhs_2$.}\label{SS:delta2}

Fix an h-stable pointed curve $(C_o, p_o)$ of genus two and let $j'
: \bar M_{2,1}^{hs} \to \bar M_4^{hs}$ denote the map sending
$(C,p)$ to $C\cup_{p=p_o} C_o$, the h-stable curve obtained by
gluing $C$ and $C_o$ such that $p$ and $p_o$ are identified to a
node. Due to
Lemma~\ref{L:slice}, to establish the nefness of $K_{\Mhs} + 2/3
\dhs$ on $\dhs_2$, it suffices to show that its pullback to $\bar
M_{2,1}^{hs}$ by $j'$ is nef.
 If $j: \bar M_{2,1} \to \bar M_4$ is the corresponding map,
then the commutative diagram of  maps
\[
\xymatrix{
\bar M_{2,1} \ar[r]^-j \ar@{-->}[d] & \bar M_4 \ar@{-->}[d] \\
\bar M_{2,1}^{hs} \ar[r]_-{j'} & \bar M_4^{hs} \\
}
\]
gives rise to a commutative diagram of the Picard groups.  Therefore, to analyze $(j')^*(K_{\Mhs} + 2/3\dhs)$ , we shall consider $j^*((\Psi^+)^{-1}\circ\Psi)^*(K_{\Mhs} + 2/3\dhs)$.
By the discrepancy formula \cite[Lemma~4.1]{HH1}, $K_{\bar
M_4^{hs}} + \alpha \d^{hs}$ corresponds to $K_{\bar{\mcl M}_4} +
(\alpha - 2)\d + (11\a - 9 )\d_1$, which pulls back to
\[
13 \lambda + (\a - 2) (\d_{irr} - \psi)  + (12\a - 11) \d_{1,\{1\}} = j^*((\Psi^+)^{-1}\circ\Psi)^*(K_{\Mhs} + 2/3\dhs).
\]
For $\a = 2/3$, this is a positive rational multiple of
\[
- \d_{irr} - 12 \d_{1,\{1\}} + 40 \psi
\]
which is in turn proportional to
$
D =  3 (- \d_{irr} - 12 \d_{1,\{1\}} + 40 \psi)$  \cite[Proposition~3.3.7]{Rulla}.
Rulla showed that $D$ gives rise to a rational map $\varphi_{\bar
Q}$ on $\M_{2,1}$ which contracts $\Delta_{1,\{1\}}$ and
\[
\bar W_2^1 = \{(C,p) \, | \, \mbox{$p$ is a Weierstrass point}\}
\]
and that $\varphi_{\bar Q}$ becomes regular after a flip
$
\bar M_{2,1} \dashrightarrow Z.
$
Then by the contraction theorem, there is a birational morphism $
Z \to Y$ contracting $\Delta_{1,\{1\}}$, and $D$ remains nef on
$Y$ \cite[Section~3.9]{Rulla}.

\begin{proposition}\label{P:Y} $\bar M_{2,1}^{hs}$ is isomorphic to Rulla's flip $Y$.
\end{proposition}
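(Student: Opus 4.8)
The plan is to exhibit $\bar M_{2,1}^{hs}$ and $Y$ as one and the same projective model of $\bar M_{2,1}^{ps}$, namely the model on which $D$ becomes ample, and then to invoke the uniqueness of such a model. Concretely, I would prove that each of the two spaces is obtained from $\bar M_{2,1}^{ps}$ by the small modification ($D$-flip) along the Weierstrass locus, so that both coincide with $\proj \bigoplus_{m \ge 0}\Gamma(\bar M_{2,1}^{ps}, mD)$.

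First I would set up the comparison birational map. The two moduli problems agree on the open locus $U$ of pointed genus two curves that are simultaneously pseudostable and h-stable---those with at worst a cusp, a smooth marked point $p$, and with $p$ away from the Weierstrass degeneration---so the moduli interpretations give a canonical isomorphism over $U$ and hence a birational map $\rho : \bar M_{2,1}^{ps} \dashrightarrow \bar M_{2,1}^{hs}$. The locus where $\rho$ is not an isomorphism is exactly the Weierstrass locus $\bar W_2^1$: as $p$ tends to a Weierstrass point the bicanonical image $|\omega_C^{\otimes 2}(2p)|$ degenerates and the h-stable limit acquires a tacnode, precisely the behaviour that Rulla's flip of $\bar W_2^1$ predicts. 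Using the explicit list of h-stable singularities (nodes, cusps, tacnodes) together with the stability analysis of the preceding theorem, I would check that the flipping locus has codimension $\ge 2$ in each space, so that $\rho$ is a \emph{small} modification; in particular it is an isomorphism in codimension one and contracts no divisor. \emph{This is the step I expect to be the main obstacle}, since it requires a fine analysis of the fibres of the GIT quotient over $\bar W_2^1$ (the tacnodal pointed curves) and a verification that the resulting modification flips in the same direction as Rulla's, rather than producing a different small modification.

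Next I would match the polarizations. The descended GIT-ample class $L$ on $\bar M_{2,1}^{hs}$ is given by the formula $(\clubsuit)$, which as $m \to \infty$ is proportional to $10\lambda + 2\psi - \delta_{irr}$. Transporting $L$ across the small modification $\rho$ and rewriting it by means of the genus two relation $\delta_1 = 5\lambda - \tfrac12\delta_{irr}$, I would identify $\rho_* L$ with a positive rational multiple of $D = 3(-\delta_{irr} - 12\delta_{1,\{1\}} + 40\psi)$ up to a class supported on the flipped locus; this is consistent with, and uses, the discrepancy computation already carried out for $j'$ in \S\ref{SS:delta2}, where $K_{\bar M_4^{hs}} + \tfrac23 \dhs$ was shown to pull back to a multiple of $-\delta_{irr} - 12\delta_{1,\{1\}} + 40\psi$. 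Since $L$ is ample on $\bar M_{2,1}^{hs}$ and $\rho$ is small, it follows that $D$ is ample on $\bar M_{2,1}^{hs}$ as well.

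Finally I would assemble these. Because $\rho$ is an isomorphism in codimension one and both spaces are normal, the relevant divisorial sheaves are determined by their restriction to the common big open set, whence
\[
\bigoplus_{m \ge 0}\Gamma\big(\bar M_{2,1}^{hs}, mD\big) \;\simeq\; \bigoplus_{m \ge 0}\Gamma\big(\bar M_{2,1}^{ps}, mD\big).
\]
As $D$ is ample on $\bar M_{2,1}^{hs}$, the left-hand $\proj$ returns $\bar M_{2,1}^{hs}$; on the other hand Rulla's construction exhibits $Y$ as the model of $\bar M_{2,1}^{ps}$ on which $D$ is ample, so the same ring has $\proj$ equal to $Y$. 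Uniqueness of $\proj$ of a graded ring then yields $\bar M_{2,1}^{hs} \simeq Y$. The only remaining point to be careful about is that $D$ is genuinely ample (not merely nef, as stated in \cite{Rulla}) on $Y$; this holds because the birational morphism $Z \to Y$ was chosen to contract exactly the $D$-trivial locus, so that no $D$-trivial curve survives on $Y$.
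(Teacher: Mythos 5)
Your overall template --- show the two spaces are isomorphic in codimension one and then exhibit a single divisor class that is ample on both, so that both are the $\tu{Proj}$ of the same section ring --- is exactly the paper's strategy. But you have applied it with the wrong divisor, and this breaks the argument. The class $D = 3(-\d_{irr} - 12\d_{1,\{1\}} + 40\psi)$ is \emph{not} ample on $Y$, nor on $\bar M_{2,1}^{hs}$: it is one of the two \emph{extremal} rays of the two-dimensional nef cone of $Y$, and the morphism it defines contracts the proper transform of $\bar W_2^1$. (This non-ampleness is not a technicality to be argued away --- it is the engine of Theorem~\ref{T:1}.(2): the exceptional locus of $\Xi$ inside $\dhs_2$ is precisely the Weierstrass tails because $D$ contracts $\bar W_2^1$ on $\bar M_{2,1}^{hs}$. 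If $D$ were ample there, $\Xi$ would be an isomorphism on $\dhs_2$.) Consequently $\proj \oplus_m \Gamma(mD)$ is not $Y$ but the strictly smaller image of $Y$ under the contraction of $\bar W_2^1$, and your concluding identification fails. Your last ``careful point'' is where this surfaces: the morphism $Z \to Y$ contracts $\Delta_{1,\{1\}}$, not the $D$-trivial curves in (the flip of) $\bar W_2^1$, which survive on $Y$. Relatedly, your polarization matching is off: the descended GIT class from $(\clubsuit)$ is proportional to $(10-\e(m))\lambda + 2\psi - \d_{irr}$, which is the pullback of $K + (7/10-\e)\d$ and corresponds to a point in the \emph{interior} of the nef cone; it is not proportional to $D$, which is the pullback of $K + \frac23\d$ (compare the $\psi/\lambda$ ratios after substituting $\d_{1,\{1\}} = 5\lambda - \frac12\d_{irr}$). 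The repair is exactly what the paper does: run your argument with the ample class $-\e'\d_{irr} + (13-\e')\d_{1,\{1\}} + (13+\e')\psi$, showing it is ample on $Y$ because it is a positive combination of the two extremal nef classes $C$ and $D$, and ample on $\bar M_{2,1}^{hs}$ because it agrees with the descended GIT linearization.

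A secondary but genuine error is your identification of the modification locus. The indeterminacy of $\bar M_{2,1}^{ps} \dashrightarrow \bar M_{2,1}^{hs}$ is not $\bar W_2^1$: the Weierstrass locus is a divisor, is made of h-stable (smooth curve, smooth point) objects, and survives on both sides as a proper transform. The actual flipping locus is the codimension-two locus of elliptic bridges $E \cup_{q_0,q_1} R$ with $p \in R$, which are traded for tacnodal curves --- this is the content of Proposition~\ref{P:eb-special} and Corollary~\ref{C:eb-Chow}, and it is what makes the map small. Nothing tacnodal happens ``as $p$ tends to a Weierstrass point.'' Your instinct that the small-modification step needs a fine analysis of the GIT fibres is right, but it must be carried out over the elliptic-bridge locus, not over $\bar W_2^1$.
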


\begin{proof} Note that  $\bar M_{2,1}^{hs}$ and $Y$ are isomorphic away from a locus of codimension two since both admit
birational maps from $\bar M_{2,1}$  contracting $\Delta_{1,\{1\}}$. It remains to show that an ample line bundle on $\bar M_{2,1}^{hs}$ is ample on $Y$.

We start with $K_{\bar {\mcl M}_4} + \a \delta$ on $\bar M_4$. We
shall prove that
\[
j^*(K_{\bar {\mcl M}_{4}} + \a \delta)  = j^* ( 13 \lambda + (\a - 2) \delta) = 13 \lambda + (\a - 2) (\delta_{irr } + \delta_{1, \{1\}}) + (2-\a) \psi \\
\]
is ample on both $Y$ and on $\bar M_{2,1}^{hs} = \bar J/\!\!/SL_5$
(Theorem~\ref{T:hs}) when $\a = 7/10 - \e$ for some small
positive rational number $\e$. Note that for $\a = 7/10 - \e$, it
is proportional to
\begin{equation}\label{E:L1}
-\e' \delta_{irr} + (13 - \e') \d_{1,\{1\}} + (13 + \e') \psi, \quad \e' = 10\e.
\end{equation}
On $Y$, $\d_{1,\{1\}}$ is contracted and (\ref{E:L1}) can be
written as a positive rational linear combination of $C \sim
\d_{irr} - 18 \d_{1,\{1\}} + 45 \psi$ and $D \sim -\d_{irr} - 12
\d_{1,\{1\}} + 40 \psi$. Thus it is ample on $Y$ since $C$ and $D$
are extremal  on the two dimensional nef cone of $Y$
\cite[\S~3.9]{Rulla}.

In \S\ref{S:h-stable}, Equation (\ref{E:polform}), we found the
linearization $\mcl L_m$  on the GIT quotient $\bar J/\!\!/SL_5$ to
be a positive rational multiple of $ \left( 10 - \e(m)\right)
\lambda - (\d_{irr} - 2 \psi)$ where $\e(m) = \frac{21m - 150}{100
m^2 - 120 m}$. On the other hand,  $$ K_{\overline{\mathcal M}_4}
+ (7/10 - \e)\delta \sim (10 - \e) \lambda - (\delta_{irr} +
\delta_1 + \delta_2)$$ pulls back under $j$ to $$(10 - \e) \lambda
- \delta_{irr} - \delta_{1,\{1\}} + \psi = (10-\e)\lambda -
(5\lambda - \frac12\delta_{irr}) + \psi \sim (10 - \e') \lambda -
\delta_{irr} + 2\psi$$ which agrees with the polarization on
$\bar J/\!\!/SL_5$ for suitable $m \gg 0$.
\end{proof}

In view of the isomorphism $Y \simeq \bar M_{2,1}^{hs}$, the
discussion preceding Proposition~\ref{P:Y} implies that  the pull
back of $K_{\bar M_4^{hs}} + \frac23 \d^{hs}$ by $j'$ is nef,
giving rise to a birational morphism
  contracting the proper transform of $\bar W^1_2$.

\subsection{Proof of Theorem~\ref{T:1}}\label{S:proof}
Recall from \S\ref{S:isom} that the positive rational linear
combination $K_{\Mhs} + \a \, \dhs$ can be written as
\[
a ( K_{\Mhs} + (7/10 - \e) \dhs ) + b (K_{\Mhs} + 2/3 \, \dhs)
\]
for some positive rational numbers $a$ and $b$, and a small
positive rational number $\e$.
The first term is ample by
\cite{HH2} and the second term is semiample by our analysis in this section thus far.
 It follows that the sum is
ample and Theorem~\ref{T:1}.(1) is established.

We shall now consider the exceptional locus of the  birational
morphism associated to $|\ell(K_{\Mhs} + 2/3\, \dhs)|$ for
sufficiently large and divisible $\ell$. We have seen in
\S\ref{S:isom} that the morphism is an isomorphism away from
$\dhs_2\cup H'$. On $H'$,  $K_{\Mhs} + 2/3\, \dhs$ contracts
$\til{B}_5 = \dhs_2 \cap H'$  \cite{HL3}. Therefore, the
exceptional locus is completely contained in $\dhs_2$.
Consider the pullback of $K_{\Mhs} + 2/3\, \dhs$ by $\M^{hs}_{2,1}\times \M^{hs}_{2,1} \to \dhs_2 \inj \Mhs$. Since   $K_{\Mhs} + 2/3\, \dhs$ is nef on $\M^{hs}_{2,1}\times \M^{hs}_{2,1} $ and a curve in $
\bar M_{2,1}^{hs} \times  \bar M_{2,1}^{hs}$  is numerically
equivalent to a sum of curves from each factor by Lemma~\ref{L:slice}, it follows that $K_{\Mhs} + 2/3\, \dhs$ has zero intersection with a curve in $\M^{hs}_{2,1}\times \M^{hs}_{2,1} $ if and only if it has zero intersection with the curves from each factor. But on $\bar M_{2,1}^{hs}$,  we proved in the
previous section that $K_{\Mhs} + 2/3\, \dhs$ gives rise to a
birational morphism with exceptional locus the proper transform
 of $\bar W_2^1$.  Transferring this result to $\dhs_2$, we conclude that  the exceptional locus is precisely the locus of Weierstrass genus two tails.

\begin{remark} Since $K_{\Mhs} + 2/3\, \dhs$ pulls back to $D$ which is extremal, it follows that $K_{\Mhs} + \a \, \dhs$ is not nef on $\Mhs$ for $\a < 2/3$.  Also, it is shown in \cite{HL3} that  $K_{\Mhs} + 2/3\, \dhs$ restricts to an extremal divisor on $\overline H_4^{hs}$.
\end{remark}

\section{Moduli space of h-stable pointed curves of genus two}\label{S:pointed-hs}
In this section, we work out the GIT of bi-log-canonically embedded
pointed curves discussed in \S\ref{S:h-stable}. We retain the
notations $d, N, P(m), \hilb, \bar J$ and $\mcl L_m$ from
\S\ref{S:h-stable}. GIT of $\bar J$ gives rise to the Hilbert
semistable bi-log-canonical pointed curves. Alternatively, one can
employ the Chow variety in place of $\hilb$. Let $\chow$ denote the
Chow variety of curves of degree $d$ in $\bP^N$. Given a
bi-log-canonical model of a pointed curve $(C, p)$, we call the
corresponding point in $\chow \times \bP^N$ the {\it Chow point} of
$(C, p)$ and denote it by $Ch(C,p)$. Then we define $J' \subset
\chow \times \bP^N$ in a similar manner and consider the GIT of
$\bar J'$, linearized by (the restriction to $\bar J'$ of)
\[
\cO_{\chow}(3)\boxtimes \cO_{\bP^N}(4)
\]
which pulls back by the Chow cycle map to the leading coefficient
of the balanced linearization $\mcl L_m$ (\S\ref{S:h-stable},
Equation~(\ref{E:balanced})) on the Hilbert scheme (see
\S\ref{S:unstable} below). We say that $(C,p)$ is {\it Chow
$($semi$)$stable} if its Chow point is GIT (semi)stable. Swinarski
completely works out the theory for pointed curves $(C, p_1,
\dots, p_n)$ embedded by $(\omega_C(\sum p_i))^\nu$ for $\nu \ge
5$, and proves that such a curve is Hilbert stable if and only if
it is Deligne-Mumford stable. This result may also be derived from
\cite{BS}. We shall work out the case $g = 2$, $n = 1$ and $\nu =
2$.

\begin{definition} A pointed curve $(C,p)$ of genus two is said to be {\it c-semistable}
if
\begin{enumerate}
\item $C$ has nodes, ordinary cusps and tacnodes as
    singularity;
\item A rational component of $C$  has $\ge 3$ special points
    counting multiplicity;
\item A genus one subcurve meets the rest of the curve in $\ge
    2$ points without counting multiplicity;
\item $p$ is smooth.
\end{enumerate}
It is {\it c-stable} if it is c-semistable and $C$ has no tacnode and no subcurve of genus one.
\end{definition}

\begin{theorem} \label{T:Chow}
A bi-log-canonical pointed curve $(C, p)$ of genus two is Chow
(semi)stable if and only if it is c-(semi)stable.
\end{theorem}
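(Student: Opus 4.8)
The plan is to run the Hilbert--Mumford numerical criterion directly on the Chow point $Ch(C,p) \in \bar J' \subset Chow \times \bP^N$, exploiting the fact (recorded in \S\ref{S:unstable}) that the linearization $\cO_{Chow}(3)\boxtimes\cO_{\bP^N}(4)$ is precisely the leading coefficient in $m$ of the balanced Hilbert linearization $\mcl L_m$ of (\ref{E:balanced}). Fixing a nontrivial one-parameter subgroup $\lambda$ of $SL_5$, I would diagonalize it with integral weights $w_0 \le w_1 \le \cdots \le w_4$ normalized by $\sum_i w_i = 0$ and choose homogeneous coordinates on $\bP^4$ adapted to the weight filtration. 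The Hilbert--Mumford index then splits as
\[
\mu(Ch(C,p),\lambda) = 3\, e_\lambda(C) + 4\, \mu_p(\lambda),
\]
where $e_\lambda(C)$ is Mumford's Chow weight of the curve part and $\mu_p(\lambda)$ is the weight of the marked point, and $(C,p)$ is Chow (semi)stable if and only if this index is $\ge 0$ (resp. $>0$) for every such $\lambda$. Since $e_\lambda(C)$ is, up to normalization, the leading coefficient in $m$ of the $m$-th Hilbert weight, every weight computation carried out in the proof of Theorem~\ref{T:Hilb} feeds directly into the present one: the sign of the Chow index is controlled solely by those leading terms.

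This observation lets me organize the argument around the comparison between Chow and Hilbert stability. By Theorem~\ref{T:Hilb} the Hilbert semistable locus coincides with the h-stable locus, and the standard implications (Hilbert semistable $\Rightarrow$ Chow semistable, and Chow stable $\Rightarrow$ Hilbert stable) reduce matters to the two families that separate c-(semi)stability from h-stability, namely curves carrying a tacnode and curves carrying a genus one subcurve. I would first treat the destabilizing (``only if'') direction: for each way $(C,p)$ can fail to be c-semistable --- a singularity worse than a tacnode, a rational component with fewer than three special points, a genus one subcurve meeting the rest in fewer than two points (without counting multiplicity), or a non-simple marked point --- I would write down an explicit adapted $\lambda$ concentrating weight on the offending subcurve (or on $p$) and read off from the displayed splitting that the index is strictly negative. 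This is a finite, essentially routine case list; the only subtlety is respecting the $3:4$ ratio between the curve and point contributions whenever the bad locus involves $p$.

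For the converse (``if'') direction I would bound $e_\lambda(C)$ from below for all $\lambda$ when $(C,p)$ is c-semistable. The h-stable case is immediate from the comparison above. The genuinely new input is the stratum of curves whose genus one subcurve meets the rest in exactly two points --- these are not Hilbert semistable, so the comparison gives nothing --- together with the tacnodal stratum, where the index is forced to vanish for a distinguished $\lambda$. On these strata I would compute $e_\lambda(C)$ exactly, using the flat limit $\lim_{t\to 0}\lambda(t)\cdot C$ to evaluate the Chow weight, and confirm that passing from ``counting multiplicity'' to ``not counting multiplicity'' in the genus one condition is exactly where the leading coefficient changes sign. The same exact computation yields the stable statement simultaneously: a c-stable curve has no tacnode and no genus one subcurve, so no $\lambda$ attains equality and the index is strictly positive, whereas a tacnodal or elliptic configuration produces a $\lambda$ with index exactly zero, exhibiting it as strictly Chow semistable (and, via Chow stable $\Rightarrow$ Hilbert stable $=$ h-stable, confirming that tacnodes are precisely what h-stability allows but Chow stability forbids).

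I expect the main obstacle to be exactly this borderline evaluation of the Chow weight on the tacnodal and genus-one strata. Away from them the leading-term analysis inherited from Theorem~\ref{T:Hilb} decides everything with room to spare, but on them the inequality is tight, so a crude degree estimate in the style of Gieseker does not suffice and one must pin $e_\lambda(C)$ down on the nose --- tracking how the weight filtration interacts with the node (or tacnode) joining the genus one piece to the rest, and verifying that the switch between counting and not counting multiplicity is what moves the sign. Once these tight cases are settled, assembling the four (semi)stability implications is formal.
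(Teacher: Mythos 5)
Your destabilization half is essentially the paper's: \S\ref{S:unstable} is exactly a finite list of adapted one-parameter subgroups, one per failure mode, with the $3:4$ weighting between $e_\rho(C)$ and the point contribution tracked throughout. The gap is in the semistability half. On the two strata you correctly identify as carrying all the difficulty --- elliptic bridges and tacnodal curves --- you propose to ``compute $e_\lambda(C)$ exactly'' at a distinguished $\lambda$ where the index vanishes. But Chow semistability requires $\mu(Ch(C,p),\lambda)\ge 0$ for \emph{every} one-parameter subgroup of $SL_5$, an infinite family with no a priori reduction to finitely many adapted ones; exhibiting a single $\lambda$ with index zero proves only non-stability, and the leading-term comparison with Hilbert weights gives you nothing here precisely because (as you note) these curves are not Hilbert semistable. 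Your proposal supplies no mechanism for the remaining quantifier over all $\lambda$, and a direct verification is not feasible for these singular configurations.

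The paper closes this gap by an entirely different mechanism: take a smoothing of the c-semistable curve, apply the semistable replacement theorem to get \emph{some} Chow semistable limit with the same Deligne--Mumford stabilization, and observe that the only c-semistable pointed curves sharing that stabilization are elliptic bridges and tacnodal curves. Proposition~\ref{P:eb-special} then produces the distinguished strictly semistable curve $(C^\star,p)$ with $\mathbb G_m$-action $\rho$ satisfying $\mu(Ch(C^\star,p),\rho^{\pm 1})=0$, and the basin-of-attraction lemma \cite[Lemma~4.3]{HH2} together with Corollary~\ref{C:eb-Chow} propagates semistability from $(C^\star,p)$ to every elliptic bridge (via $\rho$) and every tacnodal c-semistable curve (via $\rho^{-1}$). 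The stable case also falls out: a c-stable curve lies in no such basin, since $(C^\star,p)$ is the unique c-semistable pointed curve with infinite automorphisms. Note also a circularity in your setup: the paper deduces the Hilbert statements from the Chow ones via Lemma~\ref{L:ChowHilb}, so you cannot quote Theorem~\ref{T:Hilb} as an input to Theorem~\ref{T:Chow} without first giving an independent proof of it --- which would face exactly the same all-$\lambda$ obstruction you are trying to avoid.
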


The analogous statement for the Hilbert point of $(C, p)$  is given in Theorem~\ref{T:hs}. There is no strictly semistable point in the Hilbert quotient.
Therefore, the quotient space $\bar M_{2,1}^{hs} := \bar
J/\!\!/SL_{5}$ has only finite quotient singularities, and the
corresponding moduli stack $\bar {\mcl M}_{2,1}^{hs}$ is
Deligne-Mumford.

%\begin{remark} Although  $\bar M_{2,1}^{hs}$ is a coarse moduli space
%for a well defined separated moduli functor, it is not a {\it stable modular compactification} in the sense of %Smyth \cite{Smyth}. A stable modular compactification in general does not allow any curve with singularity %worse than ordinary nodes.
%\end{remark}

%The Chow cycle map $Ch: Hilb \to Chow$ descends to $\varpi: \bar
%M_2^{hs} \to Chow/\!\!/SL_5$. In view of the description of
%semistable points, $\varpi$ contracts the locus of elliptic
%bridges, which is of dimension two.

\subsection{Unstable pointed curves}\label{S:unstable}
Let $\mcl L'_\ell$ denote
\[
p_1^*\cO_{\chow}(+1)\otimes p_2^*\cO_{\bP^N}(\ell)
\]
on $\chow \times \bP^N$. Due to \cite[Theorem~4]{Kn}, we have
\begin{equation}\label{E:pullback}
\bigwedge \pi_*\cO_{\mcl C}(m) = \binom{m}{2}Ch^*\cO_{\chow}(+1) + \mbox{lower degree terms}
\end{equation}
Hence $\mcl L'_{4/3}$ pulls back via $Ch \times 1_{\bP^N}$ to a
positive rational multiple of the top coefficient of the
linearization
 $3p_1^*(\bigwedge \pi_*\cO_{\mcl C}(m)) + 2m^2
 p_2^*\cO_{\bP^N}(1)$, where $Ch: \hilb \to \chow$ is the Chow cycle map.
 It follows that
\begin{lemma}\label{L:ChowHilb} $(C, p)$ is Hilbert unstable if its Chow point
is unstable with respect to $\mcl L'_{4/3}$.
\end{lemma}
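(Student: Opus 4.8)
The plan is to prove Lemma~\ref{L:ChowHilb} by relating Hilbert stability and Chow stability through the explicit comparison of linearizations already set up in the preceding lines. The key input is Equation~(\ref{E:pullback}), which expresses the determinant bundle $\bigwedge \pi_*\cO_{\mcl C}(m)$ on $Hilb$ as $\binom{m}{2}Ch^*\cO_{Chow}(+1)$ plus lower-order terms in $m$. Substituting this into the Hilbert linearization $\mcl L_m = 3 p_1^*(\bigwedge \pi_*\cO_{\mcl C}(m)) + 2m^2 p_2^*\cO_{\bP^N}(1)$, the leading ($m^2$) coefficient becomes a positive rational multiple of $\frac{3}{2} p_1^* Ch^*\cO_{Chow}(+1) + 2 p_2^*\cO_{\bP^N}(1)$, which is exactly $(Ch\times 1_{\bP^N})^* \mcl L'_{4/3}$ up to scaling. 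Thus the top coefficient of $\mcl L_m$ is the pullback of the Chow-side linearization $\mcl L'_{4/3}$, as asserted in the discussion just above the lemma.

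First I would make this leading-coefficient identification precise and then invoke the standard relationship between finite Hilbert stability and asymptotic Hilbert stability via the Hilbert--Mumford numerical criterion. For a one-parameter subgroup $\lambda$ of $SL_{N+1}$ acting on the point $[(C,p)] \in \bar J$, the Hilbert--Mumford weight $\mu^{\mcl L_m}(\lambda, (C,p))$ is a polynomial in $m$ whose leading coefficient equals the Hilbert--Mumford weight of $\lambda$ acting on the corresponding Chow point with respect to $\mcl L'_{4/3}$, precisely because the leading term of $\mcl L_m$ pulls back from $\mcl L'_{4/3}$ under $Ch\times 1_{\bP^N}$. If the Chow point $Ch(C,p)$ is unstable, then by the numerical criterion there exists a destabilizing $\lambda$ with $\mu^{\mcl L'_{4/3}}(\lambda, Ch(C,p)) < 0$; this strictly negative value is the leading coefficient of the weight polynomial $\mu^{\mcl L_m}(\lambda, (C,p))$, so this same $\lambda$ forces $\mu^{\mcl L_m}(\lambda, (C,p)) < 0$ for all sufficiently large $m$, which is exactly Hilbert instability of $(C,p)$.

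I expect the main subtlety to lie in controlling the compatibility of the Chow cycle map $Ch: Hilb \to Chow$ with the $SL_{N+1}$-linearizations and with the marked-point factor $\bP^N$. One must check that $Ch$ is $SL_{N+1}$-equivariant (standard, as it is induced by the functorial construction of Mumford--Fogarty--Knudsen) and that the point part $p_2^*\cO_{\bP^N}(1)$ is carried along unchanged under $Ch\times 1_{\bP^N}$, so that the balancing term $2m^2 p_2^*\cO_{\bP^N}(1)$ genuinely contributes to the same leading coefficient as $\cO_{\bP^N}(\ell)$ with $\ell = 4/3$ after normalization. Because both factors are treated linearly and the point contribution scales as $m^2$ exactly matching the curve contribution, this bookkeeping is routine once (\ref{E:pullback}) is in hand; the content is entirely the degree-in-$m$ comparison of the two weight functions.

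The proof therefore reduces to a clean implication: Chow instability $\Rightarrow$ a destabilizing one-parameter subgroup $\Rightarrow$ strictly negative leading coefficient of the asymptotic Hilbert weight $\Rightarrow$ Hilbert instability. I would write it in this order, citing (\ref{E:pullback}) for the leading-coefficient identity and the Hilbert--Mumford numerical criterion for the passage between the two stability notions, leaving the equivariance of $Ch$ as a remark rather than belaboring it.
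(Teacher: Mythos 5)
Your proposal is correct and is essentially the paper's own argument: the authors likewise derive the lemma directly from Equation~(\ref{E:pullback}), observing that $\mcl L'_{4/3}$ pulls back under $Ch \times 1_{\bP^N}$ to a positive rational multiple of the leading ($m^2$) coefficient of $\mcl L_m$, so a destabilizing one-parameter subgroup for the Chow point gives a Hilbert--Mumford weight polynomial with negative leading coefficient and hence Hilbert instability for $m \gg 0$. The only difference is one of presentation: the paper leaves the numerical-criterion bookkeeping implicit, whereas you spell it out.
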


We shall employ this lemma extensively to deduce the geometric invariant theoretic property of
Hilbert points from the  GIT of corresponding Chow points,  and vice
versa.

\begin{remark} We shall interchangeably use 1-PS $\rho$ of $GL_5$ with integral weights $(r_0, \dots, r_4)$ and its corresponding 1-PS of $SL_5$ with rational weights $(r_0 - \frac15 \sum r_i, \dots, r_4 - \frac15\sum r_i)$. Also, note that
\[
\mu^{\mcl L'_{4/3}}(Ch(C, p),\rho) = \mu^{\cO_{\chow}(+1)}(Ch(C),\rho) + \frac43 \mu^{\cO_{\bP^N}(+1)}(p, \rho).
\]
Recall that the second term $\mu^{\cO_{\bP^N}(+1)}(p, \rho)$ is simply the maximum of the negative of the weights of $\rho$ on the nonzero coordinate of $p$.
\end{remark}

\begin{lemma}\label{L:smoothpoint} $(C, p)$ is Chow unstable if $p$ is not a smooth point.
\end{lemma}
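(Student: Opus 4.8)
The plan is to produce a single destabilizing one-parameter subgroup and verify the Hilbert--Mumford inequality. By the Remark preceding the statement, for any 1-PS $\rho$ of $SL_5$ the index splits as
\[
\mu^{\mcl L'_{4/3}}(Ch(C,p),\rho) = \mu^{\cO_{Chow}(+1)}(Ch(C),\rho) + \tfrac43\,\mu^{\cO_{\bP^N}(+1)}(p,\rho),
\]
so it suffices to exhibit one $\rho$ making the right-hand side negative. After acting by $SL_5$ I may assume $p = [1:0:0:0:0] = e_0$, and I take $\rho$ with weights $(4,-1,-1,-1,-1)$, which penalizes the coordinate $x_0$ that does not vanish at $p$. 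For this $\rho$ the point term is immediate: since $x_0$ is the only nonvanishing coordinate of $p$, one has $\mu^{\cO_{\bP^N}(+1)}(p,\rho) = -\min\{r_i : p_i\ne 0\} = -4$, contributing $\tfrac43(-4) = -\tfrac{16}{3}$. Weighting $x_0$ positively is precisely what makes this point contribution as negative as possible.

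The substantive work is to bound the curve term $\mu^{\cO_{Chow}(+1)}(Ch(C),\rho)$ and show it cannot cancel the $-\tfrac{16}{3}$. I would compute it through the flat limit $\lim_{t\to 0}\rho(t)\cdot C$, which under this $\rho$ is governed by the linear projection of $C$ away from $p$ in the coordinates $x_1,\dots,x_4$ that vanish at $p$; the image of this projection has degree $6 - \mathrm{mult}_p(C)$. Equivalently, and more usefully for the estimate, I would pass to the Hilbert side via Lemma~\ref{L:ChowHilb} and Equation~(\ref{E:pullback}), reducing $\mu^{\cO_{Chow}(+1)}$ to the leading $m^2$--coefficient of the minimal total $\rho$-weight of a monomial basis of $H^0(\cO_C(m))$. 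Here the weight of $x_0^{a_0}\cdots x_4^{a_4}$ is $5a_0 - m$, so minimizing total weight means using monomials of small $x_0$-degree, i.e.\ sections pulled back from the projected curve. The decisive input is that $p$ is \emph{not} smooth, so $\mathrm{mult}_p(C)\ge 2$ and the projected curve has degree $\le 4$: this cuts down the supply of low-$x_0$-degree sections and forces the weight-minimizing basis to carry strictly more weight than in the smooth case. Comparing against the point term then yields strict negativity of the sum, hence Chow instability. If the two-step 1-PS above does not by itself give a strict inequality at the boundary, I would refine it to a 1-PS whose flag is adapted to the tangent cone of $C$ at $p$, which records the two branches (node, tacnode) or the cuspidal tangent.

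The main obstacle is exactly this weight asymptotic together with the threshold comparison: one must compute the leading coefficient of the minimal total weight as a function of $\mathrm{mult}_p(C)$ and check that the smooth case $\mathrm{mult}_p(C)=1$ sits at the numerical threshold while every $\mathrm{mult}_p(C)\ge 2$ crosses it. This is delicate because it is not enough to know the degree of the projected curve; one must also control the intermediate graded pieces of the $x_0$-degree filtration of the homogeneous coordinate ring (equivalently, how the defining quadrics and higher relations of $C$ reduce powers of $x_0$), and the estimate must hold uniformly over all admissible singularity types at $p$ — nodes, cusps, and tacnodes — as well as over the degenerate configurations appearing in the closure $\bar J'$. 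Once this comparison is established, the negativity of $\mu^{\mcl L'_{4/3}}(Ch(C,p),\rho)$, and therefore Chow instability of $(C,p)$, follows at once.
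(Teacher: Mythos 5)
You have chosen the right one-parameter subgroup (your weights $(4,-1,-1,-1,-1)$ are just five times the paper's $(1,0,\dots,0)$ after the $SL_5$ normalization) and your point contribution $\tfrac43\cdot(-\tfrac45)=-\tfrac{16}{15}$ per unit of weight is correct. But the proof has a genuine gap: the upper bound on the curve term $\mu^{\cO_{Chow}(+1)}(Ch(C),\rho)$ is exactly the step you defer, and you say so yourself (``the main obstacle is exactly this weight asymptotic together with the threshold comparison''). Without that bound nothing is proved. Moreover, the route you sketch for closing it --- analyzing the flat limit via projection from $p$, the degree drop $6-\mathrm{mult}_p(C)$, and the graded pieces of the $x_0$-degree filtration of the homogeneous coordinate ring --- is substantially harder than what is needed, and the degree of the projected curve alone does not determine the Chow weight, as you correctly worry. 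You would essentially be re-deriving the theory of the local invariant $e_\rho$ from scratch.

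The paper closes this step in two lines using Schubert's Lemma~1.4: for the 1-PS concentrating weight on the coordinate not vanishing at $p$, the Chow weight satisfies $\mu(Ch(C),\rho)\le -e_\rho(C)+\frac{2d}{N+1}$, and $e_\rho(C)\ge e_\rho(\til C)_{p'}+e_\rho(\til C)_{p''}\ge 2$, where $p',p''$ are the points of the normalization over $p$ (coinciding when $p$ is a cusp, but still counted twice). The hypothesis that $p$ is \emph{not} smooth enters only here, through the existence of at least two branch contributions; one gets $\mu(Ch(C),\rho)\le -2+\tfrac{12}{5}=\tfrac25$, and then $\tfrac25+\tfrac43(-\tfrac45)=-\tfrac23<0$. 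Note also that the uniformity over nodes, cusps and tacnodes that you flag as delicate is automatic in this formulation, and no refinement of the 1-PS adapted to the tangent cone is required. To repair your proof, replace the projection/filtration analysis by the standard lower bound on $e_\rho$ at a singular point.
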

\begin{proof}
Choose coordinates so that $p = [1, 0, \dots, 0]$ and let $\rho$
be a 1-PS with weights $(1, 0, \dots, 0)$. Let $\nu : \tilde{C}
\to C$ be the normalization and let $p', p'' \in \tilde{C}$ be the
points ($p' = p''$ if $p$ is a  cusp) over $p$. Then by
\cite[Lemma~1.4]{Sch}, we have
\[
e_\rho(C) \ge e_\rho(\tilde C)_{p'} + e_\rho(\tilde C)_{p"} \ge 2 \cdot 1^2
\]
and hence
\[
\mu(Ch(C), \rho) \le -2 + \frac{2d}{N+1} = -2 + \frac{2\cdot 6}5 = \frac25.
\]
 Hence
\[
\mu(Ch(C, p), \rho) \le \frac25 + \frac43\cdot (-\frac45) = -\frac23.
\]
where $-4/5 = -(1-1/5)$ is the maximum of the negative of the weights on the nonzero coordinates of $p$.
\end{proof}

\begin{lemma} If $C$ has a triple point, then $(C,p)$ is Chow unstable.
\end{lemma}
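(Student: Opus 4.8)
The plan is to exhibit a destabilizing one-parameter subgroup adapted to the triple point, following the template of Lemma~\ref{L:smoothpoint}. Let $q$ be a triple point of $C$. After a coordinate change I place $q$ at the vertex $[1,0,\dots,0]$ and take $\rho$ to be the $1$-PS of $GL_5$ with weights $(1,0,0,0,0)$, whose $SL_5$-normalized weights are $(\tfrac45,-\tfrac15,-\tfrac15,-\tfrac15,-\tfrac15)$. As in the Remark preceding Lemma~\ref{L:smoothpoint}, the weight of the Chow point with respect to $\mcl L'_{4/3}$ splits as
\[
\mu^{\mcl L'_{4/3}}(Ch(C,p),\rho) = \mu^{\cO_{Chow}(+1)}(Ch(C),\rho) + \tfrac43\,\mu^{\cO_{\bP^N}(+1)}(p,\rho),
\]
and it suffices to show that the right-hand side is strictly negative.

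For the curve part I would invoke \cite[Lemma~1.4]{Sch} exactly as in Lemma~\ref{L:smoothpoint}: letting $\nu:\tilde C\to C$ be the normalization and running over the preimages of $q$, each branch through the vertex contributes at least $1^2$ to $e_\rho(C)$. Since $q$ has multiplicity three, the local contributions sum to at least $3$, whether $q$ is an ordinary triple point (three transverse branches), a planar triple point, or a unibranch point of multiplicity three; in every case $e_\rho(C)\ge 3$. The same inequality used in Lemma~\ref{L:smoothpoint} then gives
\[
\mu^{\cO_{Chow}(+1)}(Ch(C),\rho)\ \le\ -e_\rho(C)+\frac{2\cdot 6}{5}\ \le\ -3+\frac{12}{5}\ =\ -\frac35.
\]

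For the point part the marked point $p$ is unconstrained, so I bound it by its largest possible value. Since the $SL_5$-weights are $\tfrac45$ at the vertex and $-\tfrac15$ on the remaining coordinates, the quantity $\mu^{\cO_{\bP^N}(+1)}(p,\rho)=-\min_{\{i:p_i\neq 0\}} w_i$ is at most $\tfrac15$, attained when $p$ has a nonzero coordinate away from the vertex. Hence the point part contributes at most $\tfrac43\cdot\tfrac15=\tfrac{4}{15}$, and
\[
\mu^{\mcl L'_{4/3}}(Ch(C,p),\rho)\ \le\ -\frac35+\frac{4}{15}\ =\ -\frac13\ <\ 0,
\]
so $(C,p)$ is Chow unstable. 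If $p$ happens to sit at $q$ the point part is even more negative, namely $\tfrac43\cdot(-\tfrac45)=-\tfrac{16}{15}$, which only strengthens the conclusion.

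The computation is short, and the two inputs to the estimate are where care is needed. The main step is the lower bound $e_\rho(C)\ge 3$: I must confirm that Schubert's local weight estimate yields the full multiplicity even for degenerate triple points, where the preimage of $q$ in $\tilde C$ may be a single point and the per-branch bookkeeping of Lemma~\ref{L:smoothpoint} must be replaced by a higher local contribution. The second, minor point is that the freedom in the location of $p$ adds only $\tfrac43\cdot\tfrac15$, which is too small to offset the Chow deficit $-\tfrac35$; this is precisely what makes the plain weight vector $(1,0,0,0,0)$ already sufficient, with no need to optimize $\rho$.
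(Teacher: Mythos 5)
Your proof is correct and follows essentially the same route as the paper: the same $1$-PS with weights $(1,0,\dots,0)$ at the triple point, the same lower bound $e_\rho(C)\ge 3$ from the multiplicity (the paper cites Mumford's Proposition~3.1, which covers the degenerate unibranch case you flag), and the same final estimate $-3+\tfrac{12}{5}+\tfrac43\cdot\tfrac15=-\tfrac13<0$.
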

\begin{proof} We follow the proof of \cite[Proposition~3.1]{Mum}.  Choose coordinates so that $[1, 0, \dots, 0]$ is a triple point, and let $\rho$ be the 1-PS of $GL_{N+1}$ with weights $(1, 0, \dots, 0)$.  Then $\mu(Ch(C), \rho) \le -3 + \frac{2d}{N+1}$ and $\mu(p, \rho) \le \frac1{N+1}$. Hence the Hilbert-Mumford index of the Chow point of $(C,p)$ satisfies
\[
\mu(Ch(C,p), \rho) \le -3 + \frac{2d}{N+1} + \frac43 \frac1{N+1} = -3 + \frac83 < 0.
\]
\end{proof}

\begin{lemma} If $C$ has a multiple component, $(C, p)$ is Chow unstable.
\end{lemma}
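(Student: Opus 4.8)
The plan is to avoid a fresh Hilbert--Mumford computation and instead reduce the statement to the two instability results already in hand: Lemma~\ref{L:smoothpoint} (non-smooth marked point) and the triple point lemma above. Write the underlying $1$-cycle as $C = \mathbf{m}\, Y + D$, where $Y$ is a reduced irreducible curve occurring with multiplicity $\mathbf{m} \ge 2$ and $D$ is the residual cycle (possibly empty). Since $\mathbf{m}\cdot \deg Y \le d = 6$ and $\mathbf{m} \ge 2$, we have $\deg Y \le 3$, and in particular $Y$ spans a proper linear subspace of $\bP^4$.

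First I would dispose of the case $D = \emptyset$, i.e. $C = \mathbf{m}\, Y$. Here the scheme $C$ is non-reduced along the whole of $Y$, hence has no smooth point at all; in particular the marked point $p$ cannot be a smooth point of $C$, so $(C,p)$ is Chow unstable by Lemma~\ref{L:smoothpoint}. (Alternatively, $C$ is degenerate because $\deg Y \le 3 < 4$, and one could destabilize with the one-parameter subgroup of weight $1$ on a hyperplane containing $Y$, for which $e_\rho(C) = 2d = 12 > \tfrac{2d}{N+1}\cdot 4 = \tfrac{48}{5}$; but the non-smoothness argument is cleaner and needs no $e_\rho$ bookkeeping.)

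For the remaining case $D \ne \emptyset$, the key point is that $C$, being a flat limit of smooth connected curves, is connected, so the support of $Y$ must meet the support of $D$ at some point $q$. After moving $q$ to $[1,0,\dots,0]$ and taking the weight $(1,0,\dots,0)$ one-parameter subgroup, the same local estimate as in Lemma~\ref{L:smoothpoint} shows that the multiplicity-$\mathbf{m}$ component contributes at least $\mathbf{m} \ge 2$ and the residual branch of $D$ at least $1$ to $e_\rho$, so $e_\rho(C) \ge 3$. Thus $q$ is a point of multiplicity $\ge 3$ and the triple point lemma applies verbatim, giving $(C,p)$ Chow unstable. The only substantive work lies in this second case—confirming that connectedness of the flat limit genuinely forces $Y$ and $D$ to meet, and that the cycle-theoretic multiplicity of $C$ at the meeting point is at least $3$ (so that the triple point estimate is legitimate for a doubled component meeting a transverse branch)—and I expect the main difficulty to be conceptual rather than computational: recognizing that a multiple component cannot occur in isolation on a connected non-degenerate curve without producing either a non-reduced marked point or a point of multiplicity $\ge 3$, after which the lemma follows from the preceding two with no new GIT estimate.
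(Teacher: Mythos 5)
Your argument is essentially correct but follows a genuinely different route from the paper. The paper disposes of this in one stroke: it quotes \cite[Lemma~7.4]{HH2}, which produces a $1$-PS with weights $(3,2,1,0,0)$ adapted to the multiple component and gives $\mu(Ch(C),\rho)\le -18+\frac{2d}{N+1}(3+2+1)=-\frac{18}{5}$; the marked point then contributes at most $\frac43\cdot\frac65=\frac85$, leaving $\mu(Ch(C,p),\rho)\le -2$. You instead split into $D=\emptyset$ and $D\ne\emptyset$ and reduce to degeneracy and to a point of cycle multiplicity $\ge 3$, using only weight-$(1,0,\dots,0)$ subgroups. This works, and its correctness rests on exactly the points you flag: additivity of $e_\rho$ on cycles (so that $e_\rho(\mathbf m Y+D)_q\ge \mathbf m\, e_\rho(Y)_q+e_\rho(D)_q\ge \mathbf m+1\ge 3$) and connectedness of the flat limit (the total space over the DVR is the closure of an irreducible generic fibre, so Zariski connectedness forces $Y$ to meet $D$). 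Two caveats: first, your appeal to Lemma~\ref{L:smoothpoint} in the case $C=\mathbf m Y$ is not literally licensed, since that lemma's proof runs through the normalization of a \emph{reduced} curve with two branches (or a cusp) at $p$; the estimate $e_\rho(C)_p\ge 2$ still holds for a doubled component, but by cycle additivity rather than by the quoted argument, so your parenthetical degeneracy argument is the one to keep. Second, your margin of negativity in the main case is only $-3+\frac83=-\frac13$, versus the paper's $-2$; both suffice here, but the paper's stronger $(3,2,1,0,0)$ estimate is the more robust one (it does not depend on locating a special point of multiplicity $\ge 3$ and would survive perturbations of the balancing factor). What your approach buys is self-containedness --- no reliance on \cite[Lemma~7.4]{HH2} --- at the cost of a case analysis and some care with non-reduced cycles.
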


\begin{proof} In \cite[Lemma~7.4]{HH2}, it is shown that there is a 1-PS $\rho$ with weights
$(3,2,1,0,0)$ such that
\[
\mu(Ch(C), \rho) \le -18 + \frac{2d}{N+1}(3+2+1)
 \]
 which in our case is equal to $-18 + \frac{12}56 = -\frac{18}5.$
The maximum of the negative of the weights is $\frac65$, and we
obtain
\[
\mu(Ch(C,p), \rho) \le -\frac{18}5 + \frac43\frac65 = -2.
\]
\end{proof}

Note that a reduced curve of genus two cannot have a singularity of the form
$y^2 = x^{m}$ for $m \ge 6$, for any such singularity would
contribute $\ge 3$ to the arithmetic genus.

\begin{lemma}
If $C$ has a rhamphoid cusp i.e. a singularity of the form $y^2 =
x^5$, then $(C, p)$ is Chow unstable.
\end{lemma}

\begin{proof}
Let $p \in C$ be a rhamphoid cusp.  Note
that the analysis in \cite[Lemma~7.2]{HH2} does not depend on the
particular embedding but only on the degree. Applying it in
our situation, we obtain a 1-PS $\rho$ with weights $(5, 3, 1, 0,
0)$ with respect to suitable coordinates, so that
\[
\mu(Ch(C), \rho) \le  -25 + \frac{2d}{N+1}(5+3+1) = -25 + \frac{108}5 = -\frac{17}5.
\]
The maximum of the negative of the weights is $\frac95$. Hence
\[
\mu(Ch(C,p), \rho) \le -\frac{17}5 + \frac43 \cdot \frac95  = -1.
\]
\end{proof}

\begin{definition}
An {\it elliptic bridge} is a pointed curve of the form $(C := E
\cup_{q_0,q_1} R, p)$ where $E$ is an elliptic curve, $R$ is a
rational curve meeting $E$ in two nodes $q_0$ and $q_1$, and the
marked point $p$ is a smooth point of $R$.
\end{definition}

\begin{proposition}\label{P:eb-special}
Let $(C^\star = E\cup_{q_0,q_1}R, p)$ be a bi-log-canonical elliptic
bridge such that $E = R_0 \cup_y R_1$ consists of two rational
curves meeting in one tacnode $y$ (Figure~\ref{F:eb}). Then there is a 1-PS $\rho$
with respect to which the $m$th Hilbert point has the
Hilbert-Mumford index
\[
\mu^{\mcl L_{m}}((C^\star,p), \rho) = 2m^2 - 7m + 5 + \frac23m^2 (-3) = -7m + 5.
\]
In particular, $(C^\star, p)$ is Hilbert unstable with respect to $\rho$.
\end{proposition}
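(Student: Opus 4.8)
The plan is to exhibit an explicit one-parameter subgroup $\rho$ and read its Hilbert--Mumford index off the geometry of the degenerate configuration, keeping the curve contribution and the point contribution separate, exactly as the linearization $\mcl L_m$ of (\ref{E:balanced}) does.

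First I would fix coordinates adapted to $C^\star$. From the conductor computation $\nu^*\omega_{C^\star}=\omega_{\tilde C}(\mathfrak c)$ — in which the tacnode $y$ contributes $2$ to each of its two branches and each node $q_i$ contributes $1$ — one finds that $\omega_{C^\star}^{\otimes 2}(2p)$ has degree $2$ on each of $R_0$, $R_1$, $R$. Hence the three components are conics, the genus-one subcurve $E=R_0\cup_y R_1$ spans a hyperplane $\bP^3$, and the bridge $R$ meets that $\bP^3$ exactly at the two nodes $q_0,q_1$. I would therefore choose $x_0,\dots,x_4$ with $y=[1:0:0:0:0]$, the common tangent of $R_0,R_1$ at $y$ equal to $\langle x_0,x_1\rangle$, the span of $E$ equal to $\{x_4=0\}$, and $p$ the vertex $[0:0:0:0:1]$ on $R$.

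The subgroup $\rho$ is then taken with $GL_5$ weights $(r_0,\dots,r_4)$ that grade the coordinates by order of contact with the tacnodal flag $y\in\langle x_0,x_1\rangle\subset\langle E\rangle$ while separating the bridge direction $x_4$; this is the tacnodal analogue of the vectors $(5,3,1,0,0)$ and $(3,2,1,0,0)$ used for the ramphoid cusp and the multiple component above, and the relevant local weight estimate is that of \cite[Lemma~7.2]{HH2}. I would compute the curve part $\mu^{\wedge\pi_*\cO(m)}(C^\star,\rho)$ by Mumford's method, namely as the $SL_5$-normalized total weight $\sum_i\tilde r_i N_i$, where $\tilde r_i=r_i-\tfrac15\sum_j r_j$ and $N_i$ is the total $x_i$-degree appearing in a monomial basis of $H^0(C^\star,\cO_{C^\star}(m))$. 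Such a basis is assembled componentwise, $\cO_{C^\star}(m)$ restricting to $\cO_{\bP^1}(2m)$ on each conic, and then cut down by the gluing: one matching condition at each node $q_0,q_1$, and a length-two condition at the tacnode $y$ (value and first derivative along $x_1$). The leading $m^2$ term comes from the bulk of the monomials, while the sub-leading $-7m$ and the constant $+5$ are produced by these gluing conditions. The point part is immediate: since $p=[0:0:0:0:1]$, $\mu^{\cO(1)}(p,\rho)$ is the negative of $\tilde r_4$, and I would normalize the weights so that this equals $-3$. Combining through the balancing factor $\tfrac23 m^2$ gives $\mu^{\mcl L_m}((C^\star,p),\rho)=(2m^2-7m+5)+\tfrac23 m^2(-3)$, and the quadratic terms cancel to leave $-7m+5$.

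The main obstacle is the tacnode bookkeeping in the curve part. Since the two branches at $y$ meet with contact order two, the relevant gluing imposes value and tangential-derivative matching rather than a node's single value-matching, so the monomial basis near $y$ — and hence the degrees $N_0,N_1$ — does not behave as at a node; a subgroup supported only on the bridge direction $x_4$ reproduces the $m^2$ coefficient but not the linear and constant terms. I would pin down the exact weights by requiring the graded pieces of the above filtration to have the correct dimensions for all $m$, and cross-check the resulting polynomial against the Hilbert polynomial $P(m)=6m-1$ and the arithmetic genus $2$, as well as against the tacnode analysis already carried out in \cite{HH2}.
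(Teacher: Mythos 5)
Your framework is right---explicit coordinates adapted to the configuration, a curve contribution computed as a filtered Hilbert function, a point contribution of $-3$, and the balancing factor $\frac23 m^2$ cancelling the quadratic terms---and your degree computations (each of $R_0$, $R_1$, $R$ becomes a conic, $E$ spans a hyperplane) agree with the paper. But there is a genuine gap: you never actually produce the one-parameter subgroup, and the heuristic you propose for finding it points at the wrong object. The $\rho$ that yields $2m^2-7m+5$ is not a destabilizing flag 1-PS graded by order of contact with the tacnode (the analogue of $(5,3,1,0,0)$ for the ramphoid cusp); it is the generator of the $\bG_m$ inside the automorphism group of $(C^\star,p)$, which scales $R_0$ and $R_1$ while fixing the nodes and the tacnode and acts \emph{trivially} on the bridge $R$. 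In the paper's parametrization its weights are $(3,-2,-7,3,3)$: the three coordinates supported on $R$ all carry the same weight $3$, which is why the point contribution is $-3$ for \emph{any} smooth $p\in R$. You cannot instead normalize $\tilde r_4=3$ independently and place $p$ at $[0:0:0:0:1]$---$p$ is part of the given data, and a flag 1-PS that separates the bridge direction would not stabilize the embedded curve. Your plan to ``pin down the exact weights by requiring the graded pieces of the filtration to have the correct dimensions'' is not a well-defined procedure: the filtered Hilbert function depends on the weights, and nothing short of identifying $\rho$ with the automorphism determines the coefficients $-7$ and $+5$.

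This identification is not cosmetic. The exact cancellation $2m^2+\frac23m^2(-3)=0$ says the Chow weight of $(C^\star,p)$ with respect to $\rho$ vanishes, which is exactly what one expects because $\rho$ fixes the Hilbert point; a tacnodal flag 1-PS would in general produce a different quadratic coefficient and a different polynomial altogether. Moreover, the corollaries that follow (basins of attraction, strict semistability with respect to $\rho^{\pm1}$, and the relation $\mu(\cdot,\rho^{-1})=-\mu(\cdot,\rho)$ invoked at the end of the proof of Theorem~\ref{T:Hilb}) all require $\rho$ to lie in the stabilizer. Once the correct $\rho$ is in hand, the curve part is obtained in the paper by writing the explicit ideal from the three parametrizations and running the Gr\"obner basis algorithm of \cite{HHL}; your qualitative account of where the $-7m$ and $+5$ come from (node gluings and the length-two condition at the tacnode) is plausible but is not a substitute for that computation.
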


\begin{figure}[ht!] \labellist \small\hair 2pt
\pinlabel $0$ at 130 676
\pinlabel $0$ at 181 734
\pinlabel $0$ at 221 734
\endlabellist
\centering \includegraphics[scale=0.6]{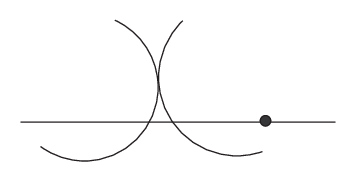}  \caption{An elliptic bridge $(C^\star, p)$}
  \label{F:eb}
\end{figure}

\begin{proof}
 Assume $q_i \in R_i$.  We have
\[
\omega_{C^\star}|R_i = \omega_{R_i}(2y + q_i) \simeq \cO_{R_i}(+1).
\]
Hence $\o_{C^\star}^{\ten 2}(2p)$ restricts to $\cO_{R_i}(+2)$ on
$R_i$. On $R$, it restricts to $\cO_R(-4 + 2q_0 +2 q_1 + 2p)
\simeq \cO_R(+2)$. Hence $C^\star$ can be parametrized by:
\[
\begin{array}{cccrrrrrrrrr}
[s_0,t_0] & \mapsto &   [&s_0^2, &s_0t_0,& t_0^2,& 0,  &   0] \\
\left[s_1,t_1\right] & \mapsto & [&0,   &  s_1t_1, &s_1^2, & t_1^2,& 0] \\
\left[s, t\right] & \mapsto &      [&s^2,   & 0,   &  0,   &  t^2, & st]
\end{array}
\]
From this parametrization, we obtain the ideal $I_{C^\star}$ of
$C^{\star}$ embedded in $\mathbb P^4$ by $\o_{C^\star}^{\ten
2}(2p)$. Let $\rho : \bC^* \to \SL_5$ with weights $(3,-2,-7,3,3)$
which comes from automorphisms of $C^\star$. Using the Gr\"obner
basis algorithm of \cite{HHL}, we obtain the filtered Hilbert
function of $C^\star$ with respect to $\rho$ as
\[
\mu([C^\star]_m, \rho) = 2 m^2 - 7 m + 5.
\]
On the other hand, the contribution from the marked point is $-3$.
With respect to $\mcl L_m$, the
Hilbert-Mumford index is
\begin{equation}\label{E:eb-hilb}
\mu^{\mcl L_{m}}((C^\star,p), \rho) = 2m^2 - 7m + 5 + \frac23m^2 (-3) = -7m + 5.
\end{equation}
\end{proof}

Now, we shall consider the basin of attraction (see \cite{HH2} for the definition) of the Hilbert point of $(C^\star, p)$ with respect to $\rho$. By definition, any curve in the basin of attraction has the same Hilbert-Mumford index with respect to $\rho$ as $(C^\star, p)$. Basins of attraction in a Hilbert scheme may be computed by using deformation theory (see, for instance, \cite[Section~5.1]{AH}).

\begin{corollary}\label{C:eb-hilb} Bi-log-canonical elliptic bridges are Hilbert unstable.
\end{corollary}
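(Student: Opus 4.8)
The plan is to promote the single computation of Proposition~\ref{P:eb-special} to the whole family of elliptic bridges by exploiting the fact that the Hilbert--Mumford index is an invariant of the flat limit. Recall Mumford's fundamental observation: for a one--parameter subgroup $\rho$ and a point $x = (C,p) \in \bar J$, the index $\mu^{\mcl L_m}(x,\rho)$ equals the $\rho$--weight of the linearization at the limit point $x_0 = \lim_{t\to 0}\rho(t)\cdot x$, so that $\mu^{\mcl L_m}(x,\rho) = \mu^{\mcl L_m}(x_0,\rho)$. Since the special bridge $(C^\star,p)$ is fixed by the $\rho$ of Proposition~\ref{P:eb-special} (that $\rho$ was extracted from $\mathrm{Aut}(C^\star)$), it is its own $\rho$--limit, and its index was found to be $-7m+5$. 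Thus it suffices to show that an arbitrary bicanonical elliptic bridge can be placed, after an $\SL_5$--change of coordinates, in a position whose $\rho$--limit is exactly $(C^\star,p)$.

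To do this I first record the normal form forced by the bicanonical embedding. For $(C = E\cup_{q_0,q_1}R, p)$ one computes $\omega_C^{\ten 2}(2p)|_R \simeq \cO_R(+2)$ and $\omega_C^{\ten 2}(2p)|_E \simeq (\omega_E(q_0+q_1))^{\ten 2}$, so $R$ is carried to a conic of degree two and $E$ to a curve of degree four, meeting at the images of $q_0,q_1$, with $p\in R$. Using the residual coordinate freedom I would arrange the conic $R$, the two nodes $q_0,q_1$ and the marked point $p$ to occupy exactly the positions they have in the parametrization of $C^\star$ given in Proposition~\ref{P:eb-special}; the $R$--component then lies in the $\rho$--fixed locus. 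It remains to check that under the degenerating action of the \emph{same} $\rho$ (weights $(3,-2,-7,3,3)$) the degree--four part $E$ flows to the tacnodal pair $R_0\cup_y R_1$, so that the flat limit of the whole configuration is $(C^\star,p)$. Granting this, flat--limit invariance gives $\mu^{\mcl L_m}((C,p),\rho) = \mu^{\mcl L_m}((C^\star,p),\rho) = -7m+5 < 0$ for $m\gg 0$, whence $(C,p)$ is Hilbert unstable. The few remaining degenerate elliptic bridges (those not reachable in this normal form) are flat limits of the generic ones, so they lie in the closure of the Hilbert--unstable locus; since that locus is closed, they are unstable as well.

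The main obstacle is precisely the middle step: producing a \emph{uniform} normalization so that the elliptic component $E$, irrespective of its modulus, has $\rho$--limit equal to $R_0\cup_y R_1$. Equivalently, one must show that $C^\star$ realizes the generic initial degeneration $\mathrm{in}_\rho(I_C)$ of an elliptic bridge in standard frame; once this initial ideal is seen to be independent of the modulus of $E$, the index is forced. An alternative that sidesteps the explicit degeneration is to argue on the Chow side: by Lemma~\ref{L:ChowHilb} it is enough to prove Chow instability, and the Chow weight $e_\rho(C)$ is assembled from purely local contributions at the branches over $q_0,q_1,p$ together with the fixed degrees of $R$ and $E$, hence is the same for every elliptic bridge and may be evaluated on $C^\star$; this yields the identical bound $\mu^{\mcl L'_{4/3}}(Ch(C,p),\rho) < 0$ and the corollary.
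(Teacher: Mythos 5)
Your reduction is the right one, and it is the same one the paper uses: since $\mu^{\mcl L_m}(x,\rho)$ equals the $\rho$-weight of the linearization at the fixed point $x_0=\lim_{t\to 0}\rho(t)\cdot x$, it suffices to show that every bicanonical elliptic bridge lies in the basin of attraction of $(C^\star,p)$ for the $\rho$ of Proposition~\ref{P:eb-special}, whose index $-7m+5$ is negative. But the step you yourself flag as ``the main obstacle'' --- verifying that a general elliptic bridge, suitably framed, actually flows to $(C^\star,p)$ under $\rho$ --- is precisely the content of the corollary, and your proposal does not supply it. The paper closes this gap not by a global normal form or a computation of $\mathrm{in}_\rho(I_C)$, but by a purely local weight computation on versal deformation spaces: the tangent coordinate $x_1/x_2$ of the branches at the tacnode has $\rho$-weight $-2-(-7)=+5$, so $\rho$ acts on the parameters $(a,b,c)$ of the versal deformation $y^2=x^4+ax^2+bx+c$ with weights $(+10,+15,+20)$, while it acts on the one-dimensional versal deformation space of the node $q_0$ with weight $-5$. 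Positivity of the tacnodal weights means every deformation smoothing the tacnode --- hence every elliptic bridge --- is attracted back to $C^\star$, and negativity at the node means no smoothing of $q_0,q_1$ is. That computation is the missing idea; without it (or an equivalent explicit degeneration) your argument establishes instability only for $(C^\star,p)$ itself.

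Your fallback Chow-side argument does not repair this. The quantity $e_\rho(C)$ is not assembled from purely local contributions at the branches over $q_0,q_1,p$ together with the degrees of $R$ and $E$: it depends on the full weight filtration of each component in its linear span, and for the degree-four genus-one component $E$, which spans a $\bP^3$, this is a priori sensitive to the chosen frame and to the modulus of $E$. So one cannot transport the value computed on $C^\star$ to an arbitrary elliptic bridge without exactly the kind of argument that is missing above. Your closing observation that degenerate bridges are handled by closedness of the unstable locus is correct but peripheral; the main step remains open.
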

\begin{proof}
Consider the basin of attraction of the bi-log-canonical elliptic bridge $C^\star=R_0\cup R_1\cup R$  with
respect to $\rho$, where $C^\star$ and $\rho$ are as in Proposition~\ref{P:eb-special}. The local versal deformation space of a tacnode
is given by $y^2 = x^4 + a x^2 + b x + c$ where $x$ parametrizes the tangent space of (the branches at) the tacnode and $y$ is a local parameter of the tacnode (at a branch).  The tangent space of (the branches at) the tacnode $[0,0,1,0,0]$
is parametrized by $x_1/x_2$, on which $\rho$ acts with weight
$-2-(-7) = +5$.
Hence the $\rho$ action on the local versal
deformation space of the tacnode has weights $(+10, +15, +20)$. On
the other hand, at the node $q_0 = [1,0,0,0,0]$, $\rho$ acts on
the two tangent directions $x_1/x_0$ and $x_4/x_0$ with weights
$-5$ and $0$ respectively. Hence it acts on the local versal deformation
space of $q_0$ with weight $-5$. It follows that the basin of
attraction with respect to $\rho$ contains an arbitrary smoothing of the tacnode but no smoothing of the nodes.
\end{proof}

\begin{corollary}\label{C:eb-Chow} \begin{enumerate}
\item Elliptic bridges and the corresponding tacnodal pointed
    curves are Chow strictly semistable with respect to $\rho^{\pm
    1}$;
\item The basin of attraction of the Chow point of $(C^\star, p)$
    with respect to $\rho$ (resp. $\rho^{-1}$)
 contain all elliptic bridges (resp. c-semistable pointed curves with a tacnode).
 \end{enumerate}
%and identified in the quotient space $Chow/\!\!/SL_5$.
\end{corollary}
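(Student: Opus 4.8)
The plan is to extract both assertions from the single Hilbert-scheme computation of Proposition~\ref{P:eb-special}, together with the deformation-weight bookkeeping already performed in the proof of Corollary~\ref{C:eb-hilb}. Assertion~(1) is the Chow-theoretic shadow of the vanishing of the leading ($m^2$) coefficient in (\ref{E:eb-hilb}), and assertion~(2) is the Chow analogue of the basin-of-attraction analysis used there to deduce Hilbert instability.

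For (1), I would first record the passage from the Hilbert index to the Chow index. By (\ref{E:pullback}) we have $\bigwedge\pi_*\cO_{\mcl C}(m)=\binom{m}{2}Ch^*\cO_{Chow}(+1)+(\text{lower order in }m)$, so taking Hilbert--Mumford indices in the definition $\mcl L_m=3p_1^*(\bigwedge\pi_*\cO_{\mcl C}(m))+2m^2p_2^*\cO_{\bP^N}(1)$ and invoking the remark following Lemma~\ref{L:ChowHilb} gives
\begin{align*}
\lim_{m\to\infty}\frac1{m^2}\,\mu^{\mcl L_m}\bigl((C,p),\rho\bigr)
&=\lambda_0\Bigl(\mu^{\cO_{Chow}(+1)}(Ch(C),\rho)+\tfrac43\,\mu^{\cO_{\bP^N}(+1)}(p,\rho)\Bigr)\\
&=\lambda_0\,\mu^{\mcl L'_{4/3}}\bigl(Ch(C,p),\rho\bigr)
\end{align*}
for a positive constant $\lambda_0$. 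Proposition~\ref{P:eb-special} computes $\mu^{\mcl L_m}((C^\star,p),\rho)=-7m+5$, whose $m^2$-coefficient is zero; hence $\mu^{\mcl L'_{4/3}}(Ch(C^\star,p),\rho)=0$. Because $\rho$ arises from automorphisms of $C^\star$, the point $Ch(C^\star,p)\in Chow\times\bP^N$ is $\rho$-fixed, so the weight on the fibre of $\mcl L'_{4/3}$ merely changes sign under $\rho^{-1}$; thus $\mu^{\mcl L'_{4/3}}(Ch(C^\star,p),\rho^{-1})=-\,\mu^{\mcl L'_{4/3}}(Ch(C^\star,p),\rho)=0$ as well. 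Both $\rho$ and $\rho^{-1}$ therefore realize equality in the numerical criterion, which is exactly the assertion of strict semistability along $\rho^{\pm1}$; the same conclusion for the general elliptic bridges (each lying in the $\rho$-basin of $C^\star$) follows from the orbit-closure relation supplied by~(2).

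For (2), I would transport the basin computation of Corollary~\ref{C:eb-hilb} from the Hilbert point to the Chow point. This is legitimate because the Chow cycle map $Ch$ is $SL_{N+1}$-equivariant and the $\rho$-weights on the local versal deformation space of $C^\star$ are intrinsic to the family, independent of which functorial point we track. As recorded there, the versal space splits into deformations of the tacnode $y$, on which $\rho$ acts with the positive weights $(+10,+15,+20)$, and deformations of the two nodes $q_0,q_1$, on which $\rho$ acts with weight $-5$. A deformation direction lies in the basin of attraction of $Ch(C^\star,p)$ under $\rho$ (limit as $t\to0$) precisely when its weight is positive, and under $\rho^{-1}$ precisely when its weight is negative. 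Smoothing the tacnode of $E=R_0\cup_yR_1$ turns $E$ into a smooth elliptic curve, so the positive-weight directions sweep out genuine elliptic bridges $E'\cup_{q_0,q_1}R$; hence the $\rho$-basin contains all elliptic bridges. Smoothing the nodes $q_0,q_1$ instead preserves the tacnode and yields c-semistable pointed curves carrying a tacnode, so the $\rho^{-1}$-basin contains all such curves.

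The step I expect to be the main obstacle is the surjectivity implicit in~(2): one must verify that the tacnode- (respectively node-) smoothings of the \emph{single} curve $C^\star$ actually sweep out the \emph{entire} family of elliptic bridges (respectively all c-semistable tacnodal pointed curves), not merely a proper subfamily. This should follow from versality of the deformation together with a parameter count matching the $j$-invariant and the gluing data of the target families against the three tacnodal and two nodal deformation parameters, but it is the point that must be pinned down rather than read off from the weights. A secondary subtlety is the bookkeeping in the limit above: one must carry the asymptotics $\binom{m}{2}\sim m^2/2$ and the balancing factor through consistently, so that the vanishing of the \emph{linear} growth in (\ref{E:eb-hilb}) genuinely forces the Chow index to be exactly $0$ rather than merely bounded.
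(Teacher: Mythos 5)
Your proposal is correct and follows essentially the same route as the paper's own proof: part (1) by combining the pullback relation (\ref{E:pullback}) with the vanishing of the $m^2$-coefficient in (\ref{E:eb-hilb}) to get $\mu^{\mcl L'_{4/3}}(Ch(C^\star,p),\rho^{\pm1})=0$, and part (2) by reusing the versal-deformation weight analysis of Corollary~\ref{C:eb-hilb} and reversing signs for $\rho^{-1}$. The surjectivity point you flag (that the tacnode- and node-smoothings sweep out \emph{all} elliptic bridges, resp.\ all c-semistable tacnodal curves) is indeed left implicit in the paper, which relies on versality without comment, so your treatment is if anything slightly more careful.
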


\begin{proof}  (\ref{E:pullback}) and (\ref{E:eb-hilb}) imply that $\mu^{\mcl L_{4/3}'}((C^\star,p),\rho^{\pm 1}) = 0$. We have already analyzed the basin of attraction
with respect to $\rho$ in Corollary~\ref{C:eb-hilb}. Since $\rho^{-1}$
acts on the deformation spaces with the weights of opposite signs,
we conclude that the basin with respect to $\rho^{-1}$ contain all
smoothings of the node and no smoothing of the tacnode.
\end{proof}
Note that this completely classifies c-semistable pointed curves
that are attracted to $(C^\star, p)$ since any 1-PS coming from
the automorphism group of $(C^\star,p)$ is an integral power of
$\rho$.

\begin{lemma}\label{L:onetac} Let $C^* = E \cup_r R$ be a bi-log-canonical curve consisting of a rational cuspidal curve $E$ and a smooth rational curve $R$ meeting in a tacnode $r$. Then for any point $p \in R$, $(C^*, p)$ is Chow unstable.
\end{lemma}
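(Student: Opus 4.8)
The plan is to exhibit an explicit one-parameter subgroup $\rho$ of $SL_5$ that destabilizes the Chow point of $(C^*,p)$, in the same spirit as the preceding lemmas. First I would record the geometry of the bicanonical model. Since $E$ is a rational curve with one cusp, $p_a(E)=1$ and $\omega_E\cong\mathcal{O}_E$; because the conductor of a tacnode restricts to $2t_E$ on each branch, $\omega_{C^*}^{\otimes 2}(2p)$ restricts to $\omega_E^{\otimes 2}(4t_E)\cong\mathcal{O}_E(4t_E)$ on $E$ (degree $4$) and to $\mathcal{O}_R(2)$ on $R$ (degree $2$). Thus $R$ is embedded as a conic spanning a plane $\Pi_R$, while $E$ spans a $\bP^3=\Pi_E$ meeting $\Pi_R$ exactly along the common tangent line $T$ of the two branches at $t$. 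I would choose coordinates with $t=[1,0,0,0,0]$, $T=\langle e_0,e_1\rangle$, $E\subset\{x_4=0\}$ and $R\subset\{x_2=x_3=0\}$; as $p$ is a smooth point distinct from the tacnode it satisfies $x_4(p)\neq 0$.

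Second I would take $\rho$ to be the $1$-PS with weights $(1,1,0,0,1)$, i.e. weight $1$ on the plane $\Pi_R$ spanned by the conic and weight $0$ on the residual line $\langle e_2,e_3\rangle$ into which $E$ extends. This is the natural destabilizer for the forbidden configuration of a genus one subcurve meeting the rest in a single point: its flat limit collapses $E$ onto the weight-zero line, while the marked point, lying on the conic inside the weight-one plane, is pushed into the attracting locus. Its virtue is that $R$ has uniform weight, so its Chow contribution is immediate, and after $SL$-centering every coordinate meeting $p$ has weight $2/5$, giving $\mu^{\mathcal{O}_{\bP^N}(1)}(p,\rho)=-2/5$.

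The computation I would then carry out uses additivity of the Chow weight over components, the length-two tacnode scheme contributing only to lower-order terms, so that $\mu(Ch(C^*),\rho)=\mu(Ch(E),\rho)+\mu(Ch(R),\rho)$. For $R$ every section of $\mathcal{O}_R(m)$ has weight $m$, so after centering this term is a fixed positive amount. The essential point is $\mu(Ch(E),\rho)$: the cusp forces the parametrization $E\colon[s:u]\mapsto[u^4:su^3:s^2u^2:s^4]$ (the monomial $s^3u$ being absent), and I would compute the weight of $H^0(\mathcal{O}_E(m))$ as the sum of the weights of the standard monomials of the weighted initial ideal $\mathrm{in}_\rho(I_E)\supset(x_1^2,\,x_0x_3,\dots)$ and extract its leading coefficient. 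Because $E$ flows into the weight-zero directions, its centered contribution is negative and dominates the conic term, so that already $\mu(Ch(C^*),\rho)<0$; adding $\tfrac43\mu(p,\rho)$ via the relation $\mu^{\mathcal{L}'_{4/3}}=\mu^{\mathcal{O}_{Chow}(1)}+\tfrac43\mu^{\mathcal{O}_{\bP^N}(1)}$ keeps the total Hilbert--Mumford index negative, proving Chow instability.

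The main obstacle is exactly this last weight computation for the cuspidal component, since $E$ is not $\rho$-invariant: one must identify the flat limit $\lim_{\tau\to 0}\rho(\tau)E$, a multiplicity-four structure on $\langle e_2,e_3\rangle$, together with its weighted Hilbert function, rather than quote a bare multiplicity bound as in the triple-point and ramphoid-cusp lemmas. Once the leading term of this weight is pinned down, the elementary conic and marked-point contributions combine with the additivity above to make the sign manifest.
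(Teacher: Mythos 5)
Your framework is fine (the restriction degrees, the decomposition of $\mu^{\mcl L'_{4/3}}$ into a Chow term plus $\frac43$ times the point term, computing the Chow weight via the flat limit), but the one-parameter subgroup you choose does not destabilize, and the computation you defer to the end is exactly the one that reveals this. With weights $(1,1,0,0,1)$ --- weight $1$ on the span of $R$, weight $0$ on the residual directions --- one can compute the minimal-weight basis of $H^0(C^*,\cO_{C^*}(m))$ exactly. Every section not vanishing on $R$ has weight exactly $m$ (any monomial involving $x_2$ or $x_3$ dies on $R$), so the $R$-quotient costs $(2m+1)m=2m^2+m$. For the $(4m-2)$-dimensional space of sections vanishing on $R$, the weight-$k$ monomials realize precisely the powers $s^i w^{4m-i}$ with $3k\le i\le 2m+2k$ on the normalization of $E$, and summing the minimal $k$ over the required exponents gives total weight $m^2-1$. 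Hence $w_\rho(m)=3m^2+m-1$ and $e_\rho(C^*)=6$, while the semistability threshold of Proposition~\ref{P:balance} for this $\rho$ is $\frac{12}{5}\cdot 3+\frac{4}{15}\cdot 3-\frac43=\frac{20}{3}>6$. Equivalently, $\mu(Ch(C^*,p),\rho)=-6+\frac{36}{5}+\frac43\cdot(-\frac25)=+\frac23>0$ for every $p\in R$. Note also that in the convention used throughout the paper (cf.\ Lemma~\ref{L:smoothpoint}), a curve sticking out into the \emph{low}-weight directions makes $e_\rho$ \emph{small} and hence $\mu$ \emph{positive}; so your key assertion that ``$E$ flows into the weight-zero directions, its centered contribution is negative and dominates\dots so that $\mu(Ch(C^*),\rho)<0$'' has the sign of the conclusion backwards.

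The underlying reason no such $\rho$ can work is that $C^*$ is perfectly balanced in the sense of Proposition~\ref{P:balance}: $|\deg R-2\deg_R\omega_{C^*}(p)|=|2-2|=0\le w/2$ and likewise for $E$, so a 1-PS that is constant on the span of a subcurve detects nothing. The instability here is caused jointly by the cusp and the tacnode, not by a degree imbalance, and the destabilizing 1-PS must be adapted to the osculating flag at the cusp. The paper parametrizes $E$ by $[s^4,s^2t^2,st^3,t^4,0]$ and $R$ by $[0,0,uv,u^2,v^2]$ and takes weights $(0,2,3,4,2)$, increasing along the orders of vanishing at the cusp and calibrated at the tacnode; a Gr\"obner-basis computation then gives $\mu([C]_m,\rho)=-4m^2+4m$, so $\mu(Ch(C^*),\rho)=-4$ and $\mu(Ch(C^*,p),\rho)\le -4+\frac43\cdot\frac{11}{5}=-\frac{16}{15}<0$. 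To repair your argument you would need to replace your $\rho$ by one of this type and actually carry out the initial-ideal computation you postponed.
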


\begin{proof}
Restricting $\o_C^{\ten
2}(2p)$ we get
\[
\o_C^{\ten 2}(2p)|E \simeq \cO_E(4r), \quad \o_C^{\ten 2}(2p)|R \simeq
\cO_R(-4)(2p+4r) \simeq \cO_R(2).
\]
So $E$ and $R$ can be
parametrized by
\[
[s, t] \mapsto [s^4, s^2t^2, st^3, t^4, 0]
\] and
\[
[u,v] \mapsto [0, 0, uv, u^2, v^2].
\]
The cusp is at $q=[1,0, \dots, 0]$ and the tacnode, at $r = [0,0,0,1,0]$.
Let $\rho$ be the 1-PS with weights $(0, 2, 3, 4, 2)$. Using the parametrization, we easily find the ideal of $C^*$ and the Hilbert-Mumford index $\mu([C]_m, \rho)$ by employing the Gr\"obner basis algorithm of \cite{HHL}. We have
\[
\mu([C]_m, \rho) = - 4m^2 + 4m
\]
and
\[
\mu(Ch(C), \rho) = \lim_{m\to \infty} \frac1{m^2} \mu([C]_m, \rho) = -4.
\]
The marked point $p$ contributes $\le 11/5$ to the Hilbert-Mumford index, and we find
that
\[
\mu(Ch(C^*, p), \rho) \le -4 + \frac43\frac{11}5 = -\frac{16}{15} < 0.
\]
\end{proof}

\begin{lemma} If $C$ has a genus one subcurve meeting the rest in one tacnode, $(C, p)$ is Chow unstable for any $p \in C$.
\end{lemma}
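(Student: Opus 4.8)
The plan is to exhibit a destabilizing one–parameter subgroup that sees only the numerical data $\deg E$ and $\dim\langle E\rangle$ of the genus one subcurve together with the tacnodal attachment, so that the argument is uniform in the analytic type of $E$ and in particular survives the case—not covered by the explicit parametrization of Lemma~\ref{L:onetac}—in which $E$ is smooth. First I would fix the geometry. Writing $C = E \cup_t C'$ with $E$ the genus one subcurve meeting $C' = \overline{C \setminus E}$ at the tacnode $t$, the local intersection length of a tacnode is $2$, so $p_a(C) = p_a(E) + p_a(C') + 2 - 1$ forces $p_a(C') = 0$; thus $C'$ is a tree of rational curves. Since $\omega_C|_E = \omega_E(2t_E)$ at a tacnode, for $p \in C'$ the polarization restricts to $\omega_C^{\otimes 2}(2p)|_E = \omega_E^{\otimes 2}(4t_E) = \cO_E(4t_E)$, a degree four line bundle on a curve of arithmetic genus one.

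As in the computation of $H^0$ preceding this lemma, the restriction $H^0(C,\omega_C^{\otimes 2}(2p)) \to H^0(E,\cO_E(4t_E))$ is surjective with one dimensional kernel $H^0(C', \cO_{C'}(2)(-2t_{C'}))$, so $E$ is embedded as a degree four curve spanning a hyperplane $\langle E\rangle \cong \bP^3$, while $C'$ is a conic spanning a $\bP^2$ meeting $\langle E\rangle$ along the tacnodal tangent line. Crucially, the integers $\deg E = 4$ and $\dim\langle E\rangle = 3$ are the same for every genus one $E$. Next I would choose coordinates with $\langle E\rangle = \{x_2 = 0\}$ and take the one–parameter subgroup $\rho$ acting with weight $1$ on the four coordinates spanning $\langle E\rangle$ and weight $0$ on $x_2$ (equivalently, after subtracting a scalar, weight $-1$ on $x_2$ alone), and compute $\mu(Ch(C),\rho) = \tfrac{2d}{N+1}\sum_i r_i - e_\rho(C)$ from \cite{Mum}. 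The virtue of this choice is that, since $x_2$ vanishes identically on $E$, the weighted Hilbert function—hence $e_\rho(C)$—depends only on $\deg E$, $\deg C'$ and the tacnodal gluing, the latter being a condition on the $2$-jet at the smooth point $t_E$; it never sees the isomorphism type of $E$. This is exactly the uniformity that the Gröbner computation of Lemma~\ref{L:onetac} lacked.

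The degree deficit $\tfrac{\deg E}{\dim\langle E\rangle + 1} = 1 < \tfrac{6}{5} = \tfrac{d}{N+1}$ should make $e_\rho(C)$ exceed its balanced value $\tfrac{2d}{N+1}\sum_i r_i$, forcing $\mu(Ch(C),\rho) < 0$. Finally, by the remark following Lemma~\ref{L:ChowHilb}, $\mu(Ch(C,p),\rho) = \mu(Ch(C),\rho) + \tfrac43\mu(p,\rho)$, and the crude bound $\mu(p,\rho) \le \tfrac{1}{N+1}\sum_i r_i - \min_i r_i = \tfrac45$ valid for any $p$ then yields $\mu(Ch(C,p),\rho) < 0$ for every $p \in C'$, whence Chow instability by Lemma~\ref{L:ChowHilb}. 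The remaining case $p \in E$, where $\omega_C^{\otimes 2}(2p)$ has degree zero on $C'$ and contracts it, I would dispose of by a separate one–parameter subgroup collapsing $C'$, the configuration being a fortiori unstable.

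The main obstacle is the precise evaluation of $e_\rho(C)$: the rough density heuristic leaves $\mu(Ch(C),\rho)$ uncomfortably close to the threshold once the point contribution $\tfrac43\cdot\tfrac45$ is added. The margin must come from the tacnode. Because the two branches at $t$ share a tangent line, the $E$-branch meets the weight flag along the genus one vanishing sequence $\{0,1,2,4\}$ at the smooth point $t_E$—the same sequence for smooth, nodal, or cuspidal $E$—and it is this extra order of contact that I expect to push $\mu(Ch(C),\rho)$ strictly negative. Carrying out the local analysis, equivalently identifying the flat limit $\lim_{t\to 0}\rho(t)\cdot C$ and computing its Chow weight exactly, is the crux; should the present $\rho$ prove only borderline, I would refine it by additionally weighting the tacnodal tangent line, as in the weighting $(0,2,3,4,2)$ of Lemma~\ref{L:onetac}, retaining type independence through the scalar–on–span reduction above.
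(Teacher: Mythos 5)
There is a genuine gap, and it is located exactly where you flagged it: the ``crux'' computation of $e_\rho(C)$ does not come out in your favor for the 1-PS you propose. Your $\rho$ (weight $1$ on the four coordinates of $\langle E\rangle$, weight $0$ on the fifth) is precisely the balance 1-PS underlying Proposition~\ref{P:balance}, and for $C_1=E$ the balance inequality reads $|\deg E - 2\deg_E\omega_C(p)| = |4-4| = 0 \le w/2$: the subcurve $E$ is exactly balanced, so this $\rho$ is not forced to destabilize. More decisively, a 1-PS that sees only the span of $E$ cannot distinguish $E\cup_t C'$ (one tacnode) from an elliptic bridge $E\cup_{q_0,q_1}R$ (two nodes): in both cases $E$ has degree $4$ spanning a hyperplane, $C'$ is a conic, and the weight filtration on $H^0(C,\cO(m))$ depends only on the total order of vanishing of the hyperplane coordinate along $C'$, which is $2$ either way (one point of order $2$ versus two points of order $1$). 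Since elliptic bridges are Chow \emph{strictly semistable} (Proposition~\ref{P:eb-special}, Corollary~\ref{C:eb-Chow}), no span-only 1-PS can prove the lemma. Quantitatively one finds $e_\rho(C)=10$ against the threshold $\frac{2d}{N+1}\sum r_i=\frac{48}{5}$, so $\mu(Ch(C),\rho)=-\frac25$; for $p\in C'$ off $\langle E\rangle$ the point contributes $+\frac43\cdot\frac45=\frac{16}{15}$, and the total is $+\frac23>0$. The margin you hoped would ``come from the tacnode'' does not materialize for this $\rho$, and your fallback --- weighting the tacnodal tangent line as in the $(0,2,3,4,2)$ of Lemma~\ref{L:onetac} --- amounts to redoing that Gr\"obner computation for an arbitrary genus one $E$, forfeiting exactly the uniformity your scalar-on-span reduction was designed to provide.

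The paper achieves uniformity in the analytic type of $E$ by a different mechanism: basins of attraction. Every genus one curve (smooth, nodal, or cuspidal) is a deformation of the cuspidal rational curve, and the destabilizing 1-PS $\rho=(0,2,3,4,2)$ of Lemma~\ref{L:onetac} acts on the versal deformation space of the cusp of $C^*$ with positive weights $(+4,+6)$; hence every $C=E\cup_t R$ lies in the basin of attraction of $C^*$ and inherits $\mu(Ch(C,p),\rho)=\mu(Ch(C^*,p^*),\rho)<0$ from the single explicit computation already done. The remaining case, where $p$ does not lie on the rational component (so that $\omega_C^{\otimes 2}(2p)$ has degree zero on $C'$), is disposed of by Propositions~\ref{P:balance} and~\ref{P:limit} rather than by a separate collapsing 1-PS. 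If you want to salvage a direct argument, you must use a 1-PS whose flag refines the tacnodal tangent direction and then verify that the vanishing sequence $(0,1,2,4)$ of $\cO_E(4t_E)$ at $t_E$ suffices to control $e_\rho(E)$ uniformly; as it stands, the decisive inequality is asserted but not established, and the crude estimate provably fails.
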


\begin{proof}
This follows since $C$ is in the basin of attraction of $C^*$ from Lemma~\ref{L:onetac}: Consider the  cusp $q$ whose local equation is $(x_2/x_0)^2 = (x_1/x_0)^3$. Its local versal deformation space is defined by $(x_2/x_0)^2 = (x_1/x_0)^3 + a (x_1/x_0) + b$, so $\bG_m$ acts on it via $\rho$ with weights $(+4,+6)$. Thus the basin of attraction of $[C]_m$ (and of $Ch(C)$)  contains arbitrary smoothing of the cusp.
%On the other hand, the local versal deformation space of the tacnode $t$ is determined by $y^2 = (x_2/x_3)^4 + a (x_2/x_3)^2 + b (x_2/x_3) + c$, and $\bG_m$ via $\rho$ acts on $(a,b,c)$ with weight $(-2, -3, -4)$, and the basin of attraction does not contain any smoothing of $t$.
Consequently, $Ch(C,p)$ is in the basin of attraction of $Ch(C^*,p^*)$ where $p^* = \lim_{\a\rightarrow 0}\rho(\a).p$. If $p$ is  not on the smooth rational component, then $(C,p)$ is Chow unstable by Propositions~\ref{P:balance} and \ref{P:limit}. So we assume that $p$  is on the smooth rational component of $C$, which forces $p^*$ to be on the smooth rational component $R$ of $C^*$. In this case we have
% If $p$ is not on the rational component, then it is Chow unstable by Propositions~\ref{P:balance} and \ref{P:limit} (and the ensuing discussion). So we may assume that $p$ is a smooth point of the rational component, and we have
\[
\mu(Ch((C, p)), \rho)  = \mu(Ch((C^*, p^*)), \rho) < 0
\]
by Lemma~\ref{L:onetac}.

\end{proof}

\subsection{Proof of semistability}
It all starts with the following crucial theorem due to David Swinarski:
\begin{theorem}\cite{Swin}  A bi-log-canonically embedded $n$-pointed curve $(C, p_1, \dots, p_n)$ is Hilbert stable (and hence Chow stable)
if $C$ is smooth and $p_i$'s are distinct smooth points of $C$.
\end{theorem}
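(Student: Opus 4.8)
The plan is to verify the Hilbert--Mumford numerical criterion head on. In the paper's sign convention, where a destabilizing one-parameter subgroup is detected by a \emph{negative} index (cf. Lemma~\ref{L:smoothpoint}), the pointed curve $(C, p_1, \dots, p_n)$ is Hilbert stable precisely when $\mu^{\mcl L_m}((C, p_1, \dots, p_n), \rho) > 0$ for every nontrivial one-parameter subgroup $\rho$ of $SL_{N+1}$ and all $m \gg 0$. First I would fix such a $\rho$ and a basis $x_0, \dots, x_N$ of $H^0(C, (\omega_C(\sum_i p_i))^{\ten \nu})$ diagonalizing it, with normalized weights $\bar r_0 \ge \cdots \ge \bar r_N$, $\sum_i \bar r_i = 0$. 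As recorded in the remark preceding Lemma~\ref{L:smoothpoint}, the index splits, up to a positive scalar, as
\[
\mu^{\mcl L_m}((C, p_1, \dots, p_n), \rho) = \mu([C]_m, \rho) + \tfrac{\nu m^2}{2\nu - 1}\sum_{i=1}^n \mu(p_i, \rho),
\]
where $\mu(p_i, \rho) = -\max\{\bar r_j : x_j(p_i) \ne 0\}$ is the point contribution and $\mu([C]_m, \rho)$ is the weight of the $m$th Hilbert point of $C$.

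I would then bound the two summands separately. For the curve part, the decisive input is Mumford's estimate for the weighted Hilbert function of a smooth curve (\cite{Mum}): introducing the weight filtration $F_t = \langle x_j : \bar r_j \ge t\rangle$ and the filtration it induces on $H^0(C, (\omega_C(\sum_i p_i))^{\ten \nu m})$ under multiplication, one bounds the $m^2$-coefficient of $\mu([C]_m, \rho)$ below by a sum of degrees of the sub-linear systems $F_t|_C$. Because $C$ is smooth and $(\omega_C(\sum_i p_i))^{\ten \nu}$ is very ample of degree $d = \nu(2g - 2 + n) > 2g$ (for $g = 2$, $n = 1$, $\nu = 2$ this is $d = 6$), these degrees are as large as possible and Mumford's inequality forces the $m^2$-coefficient of $\mu([C]_m, \rho)$ to be \emph{strictly positive} for every nontrivial $\rho$; this is exactly the asymptotic Chow/Hilbert stability of a smooth curve embedded by a sufficiently positive complete linear system. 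For the point part, the fact that each $p_i$ is a \emph{smooth} point and that the $p_i$ are \emph{distinct} prevents them from being forced simultaneously into the top-weight coordinate subspaces, so $\sum_i \mu(p_i, \rho)$ is controlled by the same weight data, and the balancing factor $\nu m^2/(2\nu - 1)$ is the precise value at which the positive curve term dominates the possibly negative point term.

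Finally I would combine the estimates: both summands scale like $m^2$, so for $m \gg 0$ the sign of $\mu^{\mcl L_m}$ is governed by its leading coefficient, which the previous paragraph shows to be strictly positive for every nontrivial $\rho$; this yields Hilbert stability. Chow stability then follows from the relation $\bigwedge \pi_*\cO_{\mcl C}(m) = \binom{m}{2}Ch^*\cO_{Chow}(+1) + (\text{lower order})$ of (\ref{E:pullback}), which identifies the $m^2$-coefficient of the Hilbert index with the Chow index, so strict positivity of the former gives strict positivity of the latter. The hard part will be the curve-part estimate in the marginal regime $\nu = 2$: unlike the comfortable case $\nu \ge 5$ of \cite{Swin} and \cite{BS}, the degree $d = 6$ sits only just above $2g$, so Mumford's inequality must be used in its sharp form, and one must check that the extremal weight configurations---where a high-weight subspace $F_t$ restricts to a sub-linear system of minimal degree---still leave room for the point term with the given balancing factor. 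The smoothness and distinctness of the $p_i$ are exactly what rule out those extremal configurations and keep the combined leading coefficient positive.
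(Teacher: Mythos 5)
The paper does not prove this statement at all: it is imported verbatim from Swinarski's thesis \cite{Swin} (see also \cite{BS}), so there is no internal argument to compare against. That said, your outline does follow the strategy Swinarski actually uses, which goes back to Mumford and Gieseker: diagonalize the one-parameter subgroup, estimate the weighted Hilbert function of $C$ via the weight filtration $F_t$, add the marked-point contribution scaled by the balancing factor $\nu m^2/(2\nu-1)$, and read off Chow stability from the $m^2$-coefficient via (\ref{E:pullback}). Your sign conventions and the splitting of the index into a curve part and a point part are consistent with the paper's setup (cf.\ the remark before Lemma~\ref{L:smoothpoint}).

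The problem is that the entire content of the theorem sits in the one step you defer. Strict positivity of the $m^2$-coefficient of $\mu([C]_m,\rho)$ alone is the classical Chow stability of a smooth curve embedded by a complete linear system of degree $\ge 2g+1$ (Mumford, \cite{Mum}); what you actually need is strict positivity of the \emph{combined} leading coefficient, including the term $\frac{\nu}{2\nu-1}\sum_i\mu(p_i,\rho)$, which can be negative. Whether the stability margin of the unpointed curve absorbs the worst-case point contribution at this particular balancing factor is precisely the theorem, and for $\nu=2$ the degree $d=\nu(2g-2+n)$ sits barely above $2g$, so the margin is thin; this cannot be settled by citing the unpointed result plus a qualitative remark. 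Moreover, the sentence claiming that smoothness and distinctness of the $p_i$ ``prevent them from being forced into the top-weight coordinate subspaces'' is not an argument: for $n=1$ distinctness is vacuous, and nothing stops a single smooth marked point from lying in the deepest piece of the filtration --- indeed that is exactly the configuration estimated in the instability lemmas of \S\ref{S:unstable}, where $p=[1,0,\dots,0]$. A correct proof must fold the point weights into the same combinatorial inequality that bounds the curve weights (as in Gieseker's criterion adapted to pointed curves in \cite{Swin} and \cite{BS}) and verify that inequality for the given numerical data; as written, your proposal restates the conclusion --- that the balancing factor is exactly the value at which the curve term dominates --- rather than proving it.
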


To get things rolling, we also need a pointed version of
\cite[Proposition~5.5]{Mum}:
\begin{proposition}\label{P:balance}
Let $C \subset \bP^5$ be a genus two curve of degree six and $p
\in C$ be a point such that $(C, p)$ is Chow semistable. Then for
any subcurve $C_1$ of $C$, we have
\[
| \deg C_1 - 2 \, \deg_{C_1}(\o_C(p)) | \le \frac w2
\]
where $w = \#(C_1 \cap \overline{C-C_1})$.
\end{proposition}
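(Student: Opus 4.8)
The plan is to prove this as the pointed analogue of Mumford's basic inequality \cite[Proposition~5.5]{Mum}, via the weight estimates already used in the unstable lemmas of \S\ref{S:unstable}. By the preceding remark, Chow semistability of $(C,p)$ means that for every $1$-PS $\rho$ of $SL_{N+1}$,
\[
\mu^{\mcl L'_{4/3}}(Ch(C,p),\rho) = \left(\frac{2d}{N+1}\sum_i r_i - e_\rho(C)\right) + \frac43\,\mu^{\cO_{\bP^N}(+1)}(p,\rho) \ge 0,
\]
where $r_i$ are the integral $GL$ weights, $e_\rho(C)$ is the Mumford weight of the cycle $Ch(C)$, and the middle expression is the Chow term computed exactly as in the lemmas above (this is the sign convention under which $e_\rho(C)\ge 2$ forced $\mu\le -2+\tfrac{12}{5}$ in Lemma~\ref{L:smoothpoint}). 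Thus the whole argument reduces to producing, for a fixed subcurve $C_1$, a $1$-PS whose weight $e_\rho(C)$ can be read off geometrically in terms of $\deg C_1$, while the balanced factor $\frac{2d}{N+1}\sum_i r_i$ together with the point term reconstruct the intrinsic target $2\deg_{C_1}(\o_C(p))$.

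First I would choose coordinates adapted to $C_1$: take $x_0,\dots,x_k$ restricting to a basis of the image of $H^0(\cO_C(1))$ in $H^0(\cO_{C_1}(1))$, so that they span $\langle C_1\rangle = \{x_{k+1}=\cdots=x_N=0\}$, and let $x_{k+1},\dots,x_N$ be linear forms vanishing on $C_1$. Let $\rho_+$ be the $1$-PS with weights $1$ on $x_0,\dots,x_k$ and $0$ elsewhere, and let $\rho_-$ be the complementary $1$-PS (weights $0$ on $x_0,\dots,x_k$ and $1$ elsewhere). Because $e_\rho(C)$ is not linear in $\rho$, applying semistability to $\rho_+$ and to $\rho_-$ produces two genuinely different inequalities, which bound $\deg C_1$ above and below by $2\deg_{C_1}(\o_C(p))\pm\frac w2$ and so give the claim. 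The marked point enters only through $\frac43\mu^{\cO_{\bP^N}(+1)}(p,\rho_\pm) = -\frac43\min_{p_i\ne 0}\bigl(r_i - \frac15\sum_j r_j\bigr)$, whose value changes according to whether $p\in\langle C_1\rangle$, i.e.\ whether $p$ lies on $C_1$; this is exactly the twist that replaces $\o_C$ by $\o_C(p)$ and turns the bare degree inequality into its pointed form.

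The heart of the matter, and the step I expect to be the main obstacle, is the geometric evaluation of $e_{\rho_\pm}(C)$ for these two-valued $1$-PS. Under a degeneration toward the coordinate subspace $\langle C_1\rangle$ the weight $e_\rho(C)$ splits into a sum of local contributions over the points of $C$: away from the attaching locus these contributions reproduce $\deg_{C_1}\cO(1)=\deg C_1$ on the $C_1$ side and the complementary degree on $\overline{C-C_1}$, and the only discrepancy is concentrated at the $w$ points of $C_1\cap\overline{C-C_1}$. Following the local weight analysis of \cite[Lemma~1.4]{Sch} and \cite[\S7]{HH2}, I would show that each such point contributes at most $\frac12$ to the weight defect, which yields the bound $\frac w2$. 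Making this constant sharp, and checking that it is insensitive to the singularity type at the attaching points (node, ordinary cusp, or tacnode) and to the possibility that $C_1$ fails to span a full coordinate subspace, is the delicate part; once $e_{\rho_\pm}(C)$ is pinned down, the remaining bookkeeping with $\sum_i r_i$ and the point term is routine.
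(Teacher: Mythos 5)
Your overall frame --- adapting Mumford's Proposition~5.5 with a $1$-PS adapted to the linear span of $C_1$, and letting the point term depend on whether $p$ lies in that span --- is the paper's, but the two places where you locate the real work are exactly where the argument as written breaks. First, the second inequality does not come from the complementary $1$-PS $\rho_-$. The subspace $\{x_0=\cdots=x_k=0\}$ is disjoint from $L_1=\langle C_1\rangle$, and since every component of $\overline{C-C_1}$ is connected through $C$ to $C_1\subset L_1$, no subcurve of $C$ lies in $\{x_0=\cdots=x_k=0\}$; so the geometric lower bound for $e_{\rho_-}(C)$ that is supposed to ``reproduce the complementary degree on $\overline{C-C_1}$'' is simply not available (and the Chow weight, unlike the Hilbert weight, is not complementary under $\rho\mapsto\rho^{-1}$, so you cannot convert the $\rho_-$ inequality into an upper bound on $e_{\rho_+}(C)$ either). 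The paper proves only the one-sided inequality $-\frac w2\le \deg C_1-2\deg_{C_1}(\omega_C(p))$, then applies the \emph{same} inequality to $C_2=\overline{C-C_1}$ (with the $1$-PS adapted to the span of $C_2$, a different subspace) and uses the identity
\[
\sum_{i=1}^2\bigl(\deg C_i-2\deg_{C_i}(\omega_C(p))\bigr)=\deg C-2\deg\omega_C(p)=0
\]
to flip the sign. That symmetry step is what makes the bound two-sided, and it is missing from your plan.

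Second, the local estimate you flag as the ``heart of the matter'' is the wrong constant and points in the wrong direction. What is needed is a \emph{lower} bound on $e_{\rho_+}(C)$, namely $e_{\rho_+}(C)\ge 2\deg C_1+w$, obtained by passing to the partial normalization $C_1\sqcup\overline{C-C_1}$, with each of the $w$ attaching points contributing at least $1$ (Schubert's Lemma~1.4, the same input as in the unstable lemmas); this is then played against the semistability \emph{upper} bound $e_{\rho_+}(C)\le \frac{12}5(n_1+1)+\frac4{15}(n_1+1)-\frac43\delta(p)=\frac83(n_1+1)-\frac43\delta(p)$. The coefficient $\frac w2$ then falls out of pure arithmetic from the nonspeciality relation $n_1+1=\deg C_1+1-g_1$ (valid because every proper subcurve has genus $\le 1$) and $\deg_{C_1}(\omega_C(p))=2g_1-2+w+\delta(p)$; it is a consequence of the global numerology $d=6$, $N+1=5$, balancing factor $\frac43$, not of a per-point defect of $\frac12$, and no delicate case analysis on the singularity type at the attaching points is required.
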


\begin{proof}
Following Mumford, let $L_1 = \{ x_{n_1+1} = \cdots = x_5 = 0 \}$
be the smallest linear subspace containing $C_1$, and let $\rho$
denote the 1-PS defined by
\[
\rho(t).x_i = \begin{cases}
t\, x_i, \quad i \le n_1 \\
x_i, \quad i > n_1.
\end{cases}
\]
By the Chow semistability of $(C, p)$, we have
\[
-e_\rho(C) + \frac{12}5(n_1+1) + \frac4{15}(n_1+1) - \frac43\delta(p) \ge 0
\]
where $\delta(p) = 0$ if $p \not\in L_1$ and $\delta(p) = 1$ if $p
\in L_1$. Passing to the partial normalization $C_1 \coprod
\overline{C-C_1}$, we have
\[
e_\rho(C) \ge w + 2 \, \deg C_1.
\]
Combining the two inequalities above, we obtain
\begin{equation}\label{E:ineq}
2 \, \deg C_1 + w \le  \frac{12}5(n_1+1) + \frac4{15}(n_1+1) - \frac43\delta(p).
\end{equation}
Since a subcurve of $C$ is of genus zero or one, it is embedded by
a non-special linear system. Thus $n_1 + 1  = \deg C_1 + 1 - g_1$
and substituting it in (\ref{E:ineq}) yields
\[
- \frac w2 \le \deg C_1 - 4 g_1 + 4 - 2w - 2\delta(p) = \deg C_1 - 2 \deg_{C_1}(\o_C(p)).
\]
Applying this inequality to $C_2 = \overline{C - C_1}$ and using
\[
0 = \deg_C - 2 \, \deg \omega_C(p) = \sum_{i=1}^2 \deg C_i - 2 \, \deg_{C_i}(\o_C(p))
\]
we obtain $\deg C_1 - 2 \, \deg_{C_1}(\o_C(p)) \le \frac w2.$
\end{proof}

\begin{proposition}\label{P:limit} Let $(\mcl C, \Sigma) \to \spec k[[t]]$ be a family of Chow semistable pointed curves of genus $g$ whose generic fibre $\mcl C_\eta$ is smooth. Here $\Sigma : \spec k[[t]] \to \mcl C$ is necessarily  a section of smooth points. If $\Phi : \mcl C \to \bP^{3g-2}_{k[[t]]}$ is an embedding such that $\Phi_\eta^*(\cO(1)) = \omega_{\mcl C_\eta/k((t))}^{\otimes 2}(2\Sigma|\mcl C_\eta)$, then $\cO(1) = \omega_{\mcl C/k[[t]]}^{\otimes 2}(2\Sigma)$.
\end{proposition}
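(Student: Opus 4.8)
The plan is to prove that the line bundle
\[
\mcl N := \Phi^*\cO(1) \ten \left(\o_{\mcl C/k[[t]]}^{\ten 2}(2\Sigma)\right)^{-1}
\]
is trivial; since the fibre singularities (nodes, cusps, tacnodes) are Gorenstein, $\o_{\mcl C/k[[t]]}$ is invertible, and as $\Sigma$ is a section of smooth points $\o_{\mcl C/k[[t]]}^{\ten 2}(2\Sigma)$ is a genuine line bundle, so triviality of $\mcl N$ is equivalent to the assertion. By hypothesis $\mcl N|_{\mcl C_\eta}\simeq\cO_{\mcl C_\eta}$. First I would observe that $\mcl C$ is normal: it is Cohen--Macaulay, being flat over a DVR with Gorenstein one-dimensional fibres, and it is regular in codimension one, since at the generic point of each component of the special fibre $\mcl C_0$ the fibre is smooth; thus $R_1 + S_2$ yields normality. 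The excision sequence for divisor class groups then gives $\mcl N \simeq \cO_{\mcl C}(\sum_i a_i C_i)$ with $a_i \in \Z$, the $C_i$ being the reduced irreducible components of $\mcl C_0$. Because $\mcl C_0 = \mathrm{div}(t)$ is principal, after adding a suitable multiple of $\mcl C_0$ I may assume $a_i \ge 0$ for all $i$ and $a_j = 0$ for at least one $j$; the goal is then to show that every $a_i = 0$.

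Suppose not, and put $a = \max_i a_i > 0$, $Y = \bigcup_{a_i = a} C_i$, and $Z = \overline{\mcl C_0 - Y}$. The special fibre $\mcl C_0$ is connected (the family is flat of constant arithmetic genus, so $h^0(\mcl C_0, \cO) = 1$), while $Y$ is neither empty nor all of $\mcl C_0$; hence $w := \#(Y\cap Z) > 0$. Writing $D' = \sum_{C_i\subset Z}(a - a_i)C_i$, one has $\sum_i a_i C_i = a\,\mcl C_0 - D'$, so from $\cO_{\mcl C}(\mcl C_0)\simeq\cO_{\mcl C}$ I get $\mcl N|_Y \simeq \cO_{\mcl C}(-D')|_Y$. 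The divisor $D'$ is effective, supported on $Z$, with every coefficient $a - a_i \ge 1$; therefore $\cO_{\mcl C}(D')|_Y$ is effective of degree equal to the length of the intersection scheme $D'\cap Y$, which is at least $w$. Consequently
\[
\deg_Y \mcl N \le -w.
\]

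On the other hand $(\mcl C_0,\Sigma_0)$ is Chow semistable, so the balancing inequality of Proposition~\ref{P:balance}, applied to the subcurve $Y$, bounds
\[
\left|\deg_Y\Phi^*\cO(1) - 2\deg_Y\o_{\mcl C_0}(\Sigma_0)\right| \le \frac{w}{2}.
\]
Since $\o_{\mcl C/k[[t]]}^{\ten 2}(2\Sigma)$ restricts to $\o_{\mcl C_0}^{\ten 2}(2\Sigma_0)$ on $\mcl C_0$, the left-hand side is exactly $|\deg_Y\mcl N|$, whence $\deg_Y\mcl N \ge -w/2$. Together with the previous bound this forces $-w \le -w/2$, i.e. $w \le 0$, contradicting $w > 0$. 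Hence $a = 0$, all $a_i$ vanish, $\mcl N$ is trivial, and $\Phi^*\cO(1)\simeq\o_{\mcl C/k[[t]]}^{\ten 2}(2\Sigma)$ as claimed.

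The indispensable ingredient is the balancing estimate coming from Chow semistability: it is what bounds $\deg_Y\mcl N$ from below, and without it the limit of the polarization need not be the twisted bicanonical bundle. I expect the crux to be precisely the confrontation of the two bounds on the distinguished subcurve $Y$ — the purely formal degeneration bound $\deg_Y\mcl N \le -w$, flowing from the fact that $\mcl N$ is supported on $\mcl C_0$, against the semistability bound $\deg_Y\mcl N \ge -w/2$ — together with the two routine but necessary checks that $\mcl C$ is normal (so that $\mcl N = \cO(\sum a_i C_i)$ in the first place) and that $Y$ is a proper subcurve of the connected curve $\mcl C_0$ (so that $w > 0$).
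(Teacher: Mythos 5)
Your argument is correct and is essentially the paper's own proof (which is Mumford's): write the discrepancy line bundle as $\cO_{\mcl C}(\sum a_i C_i)$, single out a subcurve of the special fibre on which its degree is at most $-w$ (the paper uses the components with positive coefficient, you use those with maximal coefficient --- the same computation), and contradict the balancing inequality of Proposition~\ref{P:balance}, which caps the discrepancy at $w/2$ in absolute value. Your write-up supplies the details the paper leaves implicit (normality of $\mcl C$, connectedness of $\mcl C_0$, compatibility of $\omega$ with base change) and, incidentally, gets the sign right where the paper's displayed inequality reads $\#(C_+\cap C_o)\le \deg_{C_+}\cO(\sum r_iC_i)$ with the orientation reversed.
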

In view of the relation between the Hilbert semistability  and the Chow semistability, we can
safely replace Chow by Hilbert in the statement.
\begin{proof} The proof is due to Mumford and the assertion follows from Proposition~\ref{P:balance}. We just point out that the extra data of a section does not effect the argument.
Let $C_i$ denote the components of $\mcl C_0$. Then $\cO(1) \simeq
\left(\o_{\mcl C/k[[t]]}^{\otimes 2}(2\Sigma)\right)(\sum r_i
C_i)$
 for some $r_i$ which
we may assume to be nonnegative integers such that $\min \{r_i\} =
0$. Let $C_+ = \cup_{r_i > 0} C_i$ and $C_o = \cup_{r_i = 0} C_i$.
Then
\[
\#(C_+ \cap C_o) \le \deg_{C_+} \cO_{\mcl C}(\sum r_i C_i)
= \deg C_+ - 2 \, \deg_{C_+} \o_{\mcl C_0}(p).
\]
which contradicts Proposition~\ref{P:balance} unless $r_i = 0$ for
all $i$.
\end{proof}

The proposition implies that if $(C, p)$ is Chow semistable
bi-log-canonical pointed curve of genus two, then
\begin{enumerate}
\item $C$ is nondegenerate and;
\item a rational component of $C$ has three special points
   counting multiplicity and;
\item $C$ has no elliptic tail.
\end{enumerate}
For instance, suppose that $C$ had an elliptic tail. Then $C = E_1
\cup_q E_2$ where $E_i$ are subcurves of genus one meeting each
other in one node $q$, and the bi-log-canonical system
$\omega_C^{\otimes 2}(2 p)$ is not very ample on $E_i$ if $p
\not\in E_i$.

\

We now complete the proof of Theorems~\ref{T:hs} and
\ref{T:Chow}. Let $(C, p)$ be a c-semistable bi-log-canonical pointed
curve. Consider a smoothing $\pi: (\mcl C, \Sigma) \to \spec
k[[t]]$ of $C$ and embed in $\bP^{3g-2}_{k[[t]]}$ by choosing a
frame for $\pi_*(\omega_{\mcl C/k[[t]]}^{\otimes 2}(2 \Sigma))$.
This induces a map $\spec k((t)) \to (\chow \times \bP^N)^{ss}$.
Applying the semistable replacement theorem, we obtain a map
$\spec k[[t]] \to (\chow \times \bP^N)^{ss}$ and the corresponding
 Chow semistable family $(\mcl D, \Sigma') \to \spec k[[t]]$ whose generic member is isomorphic to that of $(\mcl C, \Sigma)$. It remains to prove that they agree at the special fibre. Note that $\mcl D_0$ and $C$ necessarily have the same Deligne-Mumford stabilization. So $\mcl D_0  = C$ is evident if $C$ has no tacnode and no elliptic bridge as there is no other c-semistable curve that has the same Deligne-Mumford stabilization.
 Consider the elliptic bridge $(C^\star, p)$ from Proposition~\ref{P:eb-special}. The only c-semistable pointed curves  that have the same Deligne-Mumford stabilization as
 $(C^\star, p)$ are elliptic bridges and the tacnodal ones. But these are contained in the basin of attractions of $(C^\star, p)$, and we conclude that all elliptic bridges and c-semistable pointed curves with a tacnode are Chow semistable \cite[Lemma~4.3]{HH2}. It also follows that if $C$ is c-stable then it is Chow stable since otherwise $C$ would be contained in a basin of attraction of $(C^\star, p)$, the unique c-semistable pointed curve with infinite automorphisms.

Note that an h-stable pointed curve without a tacnode is c-stable
by definition and hence Chow stable. By
Lemma~\ref{L:ChowHilb}, it is Hilbert stable. Let $(C, p)$ be an
h-stable curve with a tacnode. Suppose it is Hilbert unstable.
Then there is a 1-PS $\rho'$ such that $\mu^{\mcl L_m}((C,p),
\rho') < 0$ for $m \gg 0$. Let $C_0$ denote the flat limit
$\lim_{t \rightarrow 0} \rho'(t).C$. Since $(C, p)$ is
c-semistable, $\mu^{\mcl L'_{3/4}}((C,p), \rho') = 0$, which
implies that $C_0$ is also c-semistable. But a c-semistable
pointed curve with infinite automorphisms is of the form $(C^\star, p)$ from Proposition~\ref{P:eb-special}, and it follows from Corollary~\ref{C:eb-Chow} that
$\rho'$ is a positive multiple of $\rho^{-1}$. But this is a
contradiction to Proposition~\ref{P:eb-special} which implies
\[
\mu^{\mcl L_m}((C,p), \rho^{-1}) = 7m - 5 > 0.
\]

\bibliographystyle{alpha}
\bibliography{m4}

\end{document}